\newtheorem{thm}{Theorem}[section]
\newtheorem{lem}[thm]{Lemma}
\newtheorem{cor}[thm]{Corollary}
\newtheorem{prop}[thm]{Proposition}
\newtheorem{defn}[thm]{Definition}
\newtheorem{rmk}{Remark}
\numberwithin{equation}{section}
\newcommand{\R}{\mathbb{R}}
\def\epsilon{\varepsilon}
\def\phi {\varphi}
\providecommand{\norm}[1]{\left\lVert#1\right\rVert}
\renewcommand{\leq}{\leqslant}
\renewcommand{\geq}{\geqslant}
\providecommand{\norm}[1]{\left\lVert#1\right\rVert}
\begin{document}

\title[Reconstruction and stable recovery of source terms ]{Reconstruction and stable recovery of source terms and coefficients appearing in diffusion equations}

\author[Yavar Kian]{Yavar Kian}
\address{Aix Marseille Univ, Universit\'e de Toulon, CNRS, CPT, Marseille, France}
\email{yavar.kian@univ-amu.fr}

\author[Masahiro Yamamoto]{Masahiro Yamamoto}
\address{Graduate School of Mathematical Sciences,
The University of Tokyo, 3-8-9 Komaba Meguro, Tokyo 153-8914, Japan\\
Honorary Member of Academy of Romanian Scientists, Splaiul Independentei 
Street, no 54, 050094 Bucharest Romania \\
Peoples' Friendship University of Russia (RUDN University)
6 Miklukho-Maklaya St, Moscow, 117198, Russian Federation}
\email{myama@ms.u-tokyo.ac.jp}

\baselineskip 18pt

\begin{abstract}  We consider the inverse source problem of determining 
a source term depending on both time and space variables for fractional and 
classical diffusion equations in a cylindrical domain from boundary 
measurements. With suitable boundary conditions we prove that some class of 
source terms which are independent of one space direction, can be 
reconstructed from boundary measurements. Actually, we prove that this 
inverse problem is well-posed. We establish also some results of Lipschitz 
stability for the recovery of source terms which we apply to the stable 
recovery of time-dependent coefficients.\\

\medskip
\noindent
{\bf  Keywords:} Inverse source problems, fractional diffusion equation, reconstruction, well-posedness, stability estimate.\\

\medskip
\noindent
{\bf Mathematics subject classification 2010 :} 35R30, 	35R11.

\end{abstract}
\maketitle
\section{Introduction}

\subsection{Statement}
Let $d \geq 2$, $\Omega=\omega\times (-\ell,\ell)$ and
$\tilde{\Omega} = \omega \times (0,\ell)$, and $\omega \subset 
\R^{d-1}$ be a bounded domain with $\mathcal C^2$ boundary. 
We set $Q = (0,T) \times \Omega$ and $\tilde{Q} = (0,T) \times \tilde{\Omega}$.
Let $\nu = \nu(x)$ be the outward unit normal vector to $\partial\Omega$ 
or $\partial\tilde{\Omega}$ at $x$. 
In what follows, we define $\mathcal A$ by the differential operator 
\[\mathcal Au(x)=-\sum_{i,j=1}^d\partial_{x_i}\left(a_{ij}(x)\partial_{x_j}u\right),\quad x\in\Omega,\]
where $a_{ij}=a_{ji}\in \mathcal C^2(\overline{\omega}\times[-\ell,\ell])$,
$1 \le i,j \le d$, satisfy
\[\sum_{i,j=1}^da_{ij}(x)\xi_i\xi_j\geq c|\xi|^2,\quad x\in\overline{\Omega},\ \xi=(\xi_1,\ldots,\xi_d)\in\R^d,\]
\begin{equation}\label{aa}a_{jd}(x',\pm\ell)=0,\quad a_{dd}(x',\pm\ell)>0,\quad  x'\in\overline{\omega},\ j=1,\ldots,d-1.\end{equation}
For $\alpha\in(0,2)\setminus\{1\}$, we denote by $\partial_t^\alpha$  the Caputo fractional derivative with respect to $t$ given by
\[
\partial_t^\alpha u(t,x):=\frac{1}{\Gamma([\alpha]+1-\alpha)}\int_0^t(t-s)^{[\alpha]-\alpha}\partial_s^{[\alpha]+1} u(s,x) d s,\ (t,x) \in Q,
\]
and by  $\partial_t^1$ the usual derivative in $t$.
We set 
$$
m_{\alpha} = 
\left\{ \begin{array}{rl}
0, \quad &0 < \alpha \leq 1, \\
1, \quad &1<\alpha < 2,
\end{array}\right.
$$
$T\in(0,+\infty)$,  $\Sigma=(0,T)\times\partial\Omega$, 
$0<\alpha<2$ and we consider the following problem 
\begin{equation}\label{eq11}
\left\{\begin{aligned}\partial_t^\alpha u+\mathcal A u=F(t,x),\quad 
&(t,x',x_d)=(t,x)\in 
Q,\\  
\partial_t^ku(0,x)=0,\quad &x\in\Omega,\ k=0,\ldots,m_\alpha.
\end{aligned}\right.
\end{equation}
We associate with this problem the following  boundary conditions
\begin{equation}\label{b1}
\partial_{x_d}^mu(t,x',\pm \ell)=0,\quad(t,x')\in(0,T)\times \omega,\end{equation}
\begin{equation}\label{b2}
\partial_\nu^n u(t,x)=0,\quad (t,x)\in(0,T)\times \partial\omega
\times(-\ell,\ell), 
\end{equation}
with $(m,n)\in\{0,1\}^2$.
In the same way, we consider the problem
\begin{equation}\label{eq12}
\left\{\begin{aligned}\partial_t^\alpha u+\mathcal A u+q(t,x')u=F(t,x),\quad &(t,x',x_d)=(t,x)\in 
\tilde{Q},\\  
\partial_t^ku(0,x)=0,\quad &x\in\tilde{\Omega},\ k=0,\ldots, m_\alpha,
\end{aligned}\right.
\end{equation}
\begin{equation}\label{b3}\partial_{x_d}u(t,x', 0)=u(t,x', \ell)=0,\quad(t,x')\in(0,T)\times \omega,\end{equation}
\begin{equation}\label{b4} u(t,x)=0,\quad (t,x)\in(0,T)\times 
\partial\omega\times(0,\ell).\end{equation}
We refer to Section 1.5 and Proposition \ref{l1} for the definition of solutions of problem \eqref{eq11}-\eqref{b3} (resp. \eqref{eq12}-\eqref{b4}) as well as the existence and uniqueness of solutions.   This verifies that 
an initial-boundary value problem \eqref{eq12}-\eqref{b4} well defines a map
from $F$ to the solution $u(t,x)$. In the present paper, we treat the inverse problem of determining the source 
term $F$ or the coefficient $q$ from measurements of the solution of \eqref{eq11}-\eqref{b2} or \eqref{eq12}-\eqref{b4} on a subboundary
of the cylindrical domain $Q$ or $\tilde{Q}$.

\subsection{Obstruction against the uniqueness}
We recall that there is an obstruction against 
the recovery of general source terms 
$F$ from any type of measurements of the solution of \eqref{eq11}-\eqref{b2} (resp. \eqref{eq12}-\eqref{b4}) on $(0,T)\times\partial\Omega$ (resp. $(0,T)\times\partial\tilde{\Omega}$). Indeed, choose $\chi \ne 0, \in\mathcal C^\infty_0(Q)$
and consider $F:=\partial_t^\alpha \chi+\mathcal A\chi$. From the uniqueness of the weak solution  of problem \eqref{eq11}-\eqref{b2} (see Section 1.4 for more details and see Definition \ref{d1} below for the definition of weak solutions), we knows that $u=\chi$ and, since $\chi\neq0$, we deduce that $F\neq0$. However, we have
$$\partial_\nu^ku(t,x)=\partial_\nu^k\chi(t,x)=0,\quad (t,x)\in(0,T)\times\partial\Omega,\quad k=0,1,\ldots,$$
with $\nu$ the outward unit normal vector of $\partial\Omega$.  

Facing this obstruction against the uniqueness, 
we will consider source terms of the form 
\begin{equation}\label{source}F(t,x',x_d):=f(t,x')R(t,x',x_d),\quad (t,x',x_d)\in(0,T)\times\omega\times (-\ell,\ell)\end{equation}
and, assuming that $R$ is known, we will consider the problem of determining 
$f$.

\subsection{Motivations}
We recall that the problem \eqref{eq11}-\eqref{b2} 
(resp. \eqref{eq12}-\eqref{b4}) is associated with different models of 
diffusion. The non-integer value of the power  $\alpha$, is frequently used 
for describing anomalous diffusion derived from continuous-time random walk 
models (see e.g., \cite{MK}). In this context the recovery of the source term 
$f$ can be seen as the recovery of a time evolving source of diffusion. 
For instance, in the case $\alpha=1$ and $d=3$, our problem can be associated 
with the recovery of a  source moving in the subset $\omega$ of the plan 
$\R^2$ from a single measurement of the heat flux at the boundary. Such a problem can be associated with the determination of different properties of 
materials such as metal (see e.g., \cite{Ki} for the heat equation). 
We refer to \cite{HKLZ} for applications of recovery of source terms of 
the form \eqref{source} to the recovery of moving sources in the 
electrodynamics.
For non-integer value of the power $\alpha$, in the spirit of \cite{NSY} 
(see also \cite{JR}), our inverse problem can  be seen as the recovery of 
a moving source  of diffusion of a contaminant under the ground. 
As unknown sources, we assume the form of (\ref{source}),
which can be interpreted for example that an unknown source $f(t,x')$ depends 
only on the depth variable and $t$ in the case of $d=2$, which corresponds 
to a layer structure, and on the planar locations 
$(x_1,x_2)$ and $t$ but not on the depth in the case of $d=3$, which may be 
a good approximation if $\Omega$ is a very thin domain 
in the direction of $x_3$.
 
\subsection{Known results}
Inverse source problems have received a lot of  attention these last decades 
among the mathematical community (see \cite{I} for an overview). 
For diffusion equations corresponding to the case $\alpha=1$ with 
time independent source terms, several authors investigated the conditional 
stability (e.g. 
\cite{choY,yam,yam1}).  Following the Bukhgeim-Klibanov approach introduced 
in \cite{BK}, \cite{IY} established Lipschitz stable recovery of the source 
from one Neumann boundary measurement. 
In \cite{cho}, the authors derived a stability estimate for this problem 
from a single Neumann observation of the solution on an arbitrary portion of 
the boundary.  For fractional diffusion equations corresponding to 
the case $\alpha\neq1$, \cite{TU1,TU2} proved the recovery of a time independent source term appearing in some class of  one dimensional time-space fractional diffusion equations while \cite{JLLY} treated this problem in the multi-dimensional case ($d\geq2$) for time fractional diffusion equations.
Despite  the physical backgrounds related to 
various anomalous diffusion phenomena stated above,
to our best knowledge, there is no result in the mathematical literature 
dealing with  the recovery of source terms, depending on both time and space 
variables, of the form \eqref{source}, for fractional diffusion equations.
Despite  the physical backgrounds related to 
various anomalous diffusion phenomena stated above,
to our best knowledge, there is no result in the mathematical literature 
dealing with  the recovery of source terms, depending on both time and space 
variables, of the form \eqref{source}, for fractional diffusion equations.
We mention also the work of \cite{ JR0, JR, KOSY, KSY,LKS, RZ} where some 
inverse coefficient problems and some related results have been considered.

The above mentioned results are all concerned with the determination of
time independent source terms $f(x)$. 
Several authors considered also the recovery of time-dependent source terms. 
In \cite{FK,SY} the authors proved the stable recovery of a source term 
$f(t)$ depending only on the time variable from measurements of solutions
at one spatial point over time interval. 
As long as the classical partial differential equations with natural 
number $\alpha$ are concerned, 
some papers have also been devoted to the unique existence
and the stability in finding source terms of the form 
\eqref{source} (see e.g. \cite{Bez, HK,I}). In particular, 
in \cite[Section 6.3]{I} the author proved the reconstruction and the unique 
recovery of source terms of the form 
\eqref{source}  appearing in a parabolic equation on the half space. 

Our main results stated below, seem to be the first achievements for the 
inverse source problem of determining $f(t,x')$ in \eqref{source} for 
fractional partial differential equations.

\subsection{Preliminary properties}

In the present paper, following \cite{KY,SY}, we consider solutions of problem \eqref{eq11}-\eqref{b3} (resp. \eqref{eq12}-\eqref{b4} with $q=0$) in the following weak sense.

\begin{defn}\label{d1} 
Let $F\in L^2(Q)$ (resp. $F\in L^2((0,T)\times\tilde{\Omega})$). We say that problem \eqref{eq11}-\eqref{b2} $($resp.  \eqref{eq12}-\eqref{b4} with $q=0$$)$ admits a  weak solution $u$ if there exists $v\in L^2_{\textrm{loc}}(\R^+;L^2(\Omega))$ such that:\\
1) $v_{\vert Q}=u$ and $\inf\{\epsilon>0:\ e^{-\epsilon t}v\in L^1(\R^+; L^2(\Omega))\}=0$,\\
2) for all $p>0$ the Laplace transform $V(p)=\int_0^{+\infty}e^{-pt}v(t,.)dt$
with respect to $t$ of $v$, satisfies \eqref{b1}-\eqref{b2} (resp.  \eqref{b3}-\eqref{b4}) and solves
\[
(\mathcal A +p^\alpha)V(p)=\hat{F}(p) \quad \textrm{in }\Omega\ (\textrm{resp. in }\tilde{\Omega}),
\]
where $\hat{F}(p)=\mathcal L[F(t,.)\mathds{1}_{(0,T)}(t)](p)=\int_0^{T}e^{-pt}F(t,.)dt$ and $\mathds{1}_{(0,T)}$ is the characteristic function 
of $(0,T)$. 
\end{defn}
By the results in \cite{LM1,SY}, we can prove that for $F\in L^2(Q)$  problem \eqref{eq11}-\eqref{b2} (resp. \eqref{eq12}-\eqref{b4} with $q=0$) admits a unique solution $u\in L^2(0,T;H^1(\Omega))$ satisfying $\partial_t^\alpha u,\mathcal A u\in L^2(Q)$. For sake of completeness we recall this result in the Appendix.

In \eqref{eq12}, we assume that  $a_{ij}\in\mathcal C^3(\overline{\Omega})$, $i,j\in\{1,\ldots,d\}$, satisfy the following condition
\begin{equation}\label{co2} 
a_{ij}(x',-x_d)=a_{ij}(x',x_d),\quad (x',x_d)
\in\overline{\tilde{\Omega}},\ i,j\in\{1,\ldots,d\}.
\end{equation}
Then we consider the problem

\begin{equation}\label{eq21}
\left\{\begin{aligned}\partial_t^\alpha u+\mathcal A u=F(t,x),\quad &(t,x)\in 
(0,T)\times \tilde{\Omega},\\  
u(t,x)=0,\quad &(t,x)\in(0,T)\times [\partial\omega\times(0,\ell)\cup \omega\times\{\ell\}],\\ 
-\partial_{x_d}u(t,x',0)=0\quad &(t,x')\in(0,T)\times \omega,\\
\partial_t^ku(0,x)=0,\quad &x\in\Omega,\ k=0, m_\alpha.\end{aligned}\right.
\end{equation}

Here we consider weak solutions in the sense of Definition \ref{d1} and, following \cite{KY,SY}, we can prove that there exists an operator valued function $J(t)\in\mathcal B(L^2(\tilde{\Omega}))$ such that
the solution of \eqref{eq21} takes the form
$$u(t,\cdot)=\int_0^tJ(t-s)F(s,\cdot)ds.$$
Using this definition, for $q\in L^\infty((0,T)\times\omega)$ we can define the solution $u$ of \eqref{eq12}-\eqref{b4} in the mild sense as a solution of the integral equation
$$u(t,\cdot)=-\int_0^tJ(t-s)qu(s,\cdot)ds+\int_0^tJ(t-s)F(s,\cdot)ds.$$

\subsection{Main results}
From now on, we assume that $F$ takes the form \eqref{source}. For our first result we need an assumption on $\omega$ and $\mathcal A$ that guarantees the elliptic regularity of the operator $\mathcal A$.  Indeed, due to the fact that the 
domain $\Omega$ is only Lipschitz, some extra assumptions will be required for 
guaranteeing the elliptic regularity of $\mathcal A$ with the boundary conditions \eqref{b1}-\eqref{b2}. For this purpose, for $m,n=0,1$, 
we introduce the condition (H$mn$) (in (H$mn$), $m,n$ denote the numbers $m$ and $n$) corresponding to the requirement that for all $v\in H^1(\Omega)$ satisfying $\mathcal Av\in L^2(\Omega)$ and 
\eqref{b1}-\eqref{b2} with these values of $m,n$, 
we have $v\in H^2(\Omega)$ and there exists $C>0$ depending only on 
$\mathcal A$, $m$, $n$ and $\Omega$ such that
$$\norm{ v}_{H^2(\Omega)}\leq C(\norm{\mathcal A v}_{L^2(\Omega))}+\norm{v}_{L^2(\Omega))}).$$

Note that conditions (H00) and (H11) will be fulfilled if, for instance, 
we assume that $\omega$ is convex. Indeed, in that case $\Omega$ will also
be convex and, in virtue of \cite[Theorem 3.2.1.2]{Gr} and \cite[Theorem 3.2.1.3]{Gr}, (H00) and (H11) will be fulfilled. In the same way, assuming that 
\begin{equation}\label{eli}a_{id}=0,\quad \partial_{x_j}a_{dd}=0,\quad \partial_{x_d}a_{ij}=0,\quad i,j\in\{1,\ldots,d-1\},\end{equation}
we deduce from a separation of variable argument similar to \cite[Lemma 2.4]{GK} that, for all $m,n=0,1$, (H$mn$) is fulfilled.

Using the conditions (H$mn$), we obtain the following.

\begin{thm}\label{t3} 
Let  \emph{(H00)}, \emph{(H10)} be fulfilled and assume that $R,\partial_{x_d}R\in L^\infty(Q)$ and there exists a constant $c>0$ such that 
\begin{equation}\label{t1a}|R(t,x',\ell)|\geq c,\quad (t,x')\in(0,T)\times\omega.\end{equation}
Assume also that  the condition
\begin{equation}\label{t3a}\partial_{x_d}a_{ij}=0,\quad i,j=1,\ldots,d-1,
\end{equation}
is fulfilled. Then, for $f\in L^2((0,T)\times\omega)$, the solution $u$ 
of \eqref{eq11}-\eqref{b2} with $m=1$ and $n=0$ satisfies 
$u\in H^3(-\ell,\ell;L^2((0,T)\times\omega))$, $\partial_t^\alpha u, \mathcal A u\in H^1(-\ell,\ell;L^2((0,T)\times\omega))$. Therefore, we can define 
\begin{equation}\label{t3b}h:=(t,x')\mapsto\frac{[\partial_t^\alpha u+(\mathcal A+a_{dd}\partial_{x_d}^2)u](t,x',\ell)}{R(t,x',\ell)}\in L^2((0,T)\times\omega).\end{equation}
Moreover, we
can define an  operator  $\mathcal H\in \mathcal B(L^2((0,T)\times\omega))$,  such that   $f$ solves the  equation 
\begin{equation}\label{t3c}f=h+\mathcal H f,\end{equation}
which is well-posed.
Finally, for every $(h,f)\in L^2((0,T)\times\omega)\times L^2((0,T)\times\omega)$ satisfying \eqref{t3c} the solution $u$ of \eqref{eq11}-\eqref{b2} satisfies \eqref{t3b}.
In the same way, assuming that  \emph{(H01)}  and \emph{(H11)} are fulfilled, the same results hold true for the problem \eqref{eq11}-\eqref{b2}
with $m=1$ and $n=1$.
\end{thm}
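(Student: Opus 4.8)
The plan is to run everything through the Laplace-transform / weak-solution framework of Definition \ref{d1}, to use the structural hypotheses \eqref{aa} and \eqref{t3a} to bootstrap regularity in the $x_d$-direction, and then to read off the operator equation \eqref{t3c} by evaluating \eqref{eq11} on the face $x_d=\ell$. Since the whole construction is linear in $f$, I would write $u=u_f$ for the solution with source $fR$ and track the dependence on $f$. For the regularity, fix $p>0$ and let $V=V(p)$ be the Laplace transform; Definition \ref{d1} gives $(\mathcal A+p^\alpha)V=\hat F(p)$ with \eqref{b1}-\eqref{b2} for $m=1,n=0$, and \emph{(H10)} yields $V\in H^2(\Omega)$ with the associated a priori bound. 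The key observation is that $w:=\partial_{x_d}V$ solves a \emph{Dirichlet} problem: differentiating \eqref{b1} with $m=1$ gives $w(x',\pm\ell)=0$, and since $u=0$ on the (horizontal) lateral boundary for all $x_d$ by \eqref{b2} with $n=0$, also $w=0$ there, so $w$ fulfils the boundary conditions of configuration \emph{(H00)}. Differentiating the interior equation, the commutator $[\partial_{x_d},\mathcal A]V=-\sum_{i,j}\partial_{x_i}\big((\partial_{x_d}a_{ij})\partial_{x_j}V\big)$ retains, by \eqref{t3a}, only terms carrying an index equal to $d$; these involve at most second derivatives of $V\in H^2(\Omega)$, and $\partial_{x_d}F=f\,\partial_{x_d}R\in L^2(Q)$ because $f$ is independent of $x_d$ and $\partial_{x_d}R\in L^\infty$. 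Hence $g:=\partial_{x_d}\hat F-[\partial_{x_d},\mathcal A]V\in L^2(\Omega)$, and applying \emph{(H00)} to $(\mathcal A+p^\alpha)w=g$ gives $w\in H^2(\Omega)$, i.e. one further $x_d$-derivative. Tracking the $p$-dependence of these bounds and inverting the Laplace transform as in Proposition \ref{l1} and the Appendix, I obtain $u\in H^3(-\ell,\ell;L^2((0,T)\times\omega))$; then $\partial_t^\alpha u,\mathcal A u\in H^1(-\ell,\ell;L^2((0,T)\times\omega))$ follows from the equation itself.

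With this regularity the traces at $x_d=\ell$ make sense in $L^2((0,T)\times\omega)$, and I would compute $[\mathcal A+a_{dd}\partial_{x_d}^2]u$ on that face. Using $-\partial_{x_d}(a_{dd}\partial_{x_d}u)=-a_{dd}\partial_{x_d}^2u-(\partial_{x_d}a_{dd})\partial_{x_d}u$, the combination removes the normal second derivative, and at $x_d=\ell$ the remaining terms collapse: every term containing a factor $\partial_{x_d}u$ vanishes by \eqref{b1} ($m=1$), and every term carrying a coefficient $a_{id}$ or $a_{dj}$ vanishes by \eqref{aa}, leaving a purely tangential expression in $u(\cdot,\ell)$. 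Thus $h$ in \eqref{t3b} is well defined in $L^2((0,T)\times\omega)$ and, crucially, is computable from $u$ on $(0,T)\times\omega\times\{\ell\}$ alone. Evaluating \eqref{eq11} on the face gives $fR(\cdot,\ell)=[\partial_t^\alpha u+\mathcal A u](\cdot,\ell)$, so subtracting \eqref{t3b} yields the identity $f=h-a_{dd}(\cdot,\ell)\partial_{x_d}^2u(\cdot,\ell)/R(\cdot,\ell)$. I would therefore \emph{define} $\mathcal H f:=-a_{dd}(\cdot,\ell)\partial_{x_d}^2u_f(\cdot,\ell)/R(\cdot,\ell)$; the hypotheses $R,\partial_{x_d}R\in L^\infty$ and $|R(\cdot,\ell)|\geq c$, together with the regularity above, show $\mathcal H\in\mathcal B(L^2((0,T)\times\omega))$, and by construction $f=h+\mathcal H f$, which is \eqref{t3c}. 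The converse is immediate: if $(h,f)$ satisfies \eqref{t3c}, solving the forward problem with source $fR$ reproduces \eqref{t3b}.

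The heart of the matter, and what I expect to be the main obstacle, is the well-posedness, i.e. the bounded invertibility of $I-\mathcal H$ on $L^2((0,T)\times\omega)$. Here I would use the representation $u_f(t,\cdot)=\int_0^tJ(t-s)(f(s,\cdot)R(s,\cdot))\,ds$, which shows that $\mathcal H$ is \emph{causal} in $t$: $(\mathcal H f)(t,\cdot)$ depends only on $f|_{(0,t)}$. Thus $\mathcal H$ is a Volterra-type operator, and the natural route is to exhibit an operator-valued convolution kernel and prove $I-\mathcal H$ invertible either through a Neumann series after passing to an exponentially weighted norm $\norm{e^{-\rho t}\cdot}$ (which turns a Volterra operator with integrable kernel into a contraction for $\rho$ large), or through a resolvent-kernel construction. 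The delicate point is that $\mathcal H$ involves the \emph{second} normal-derivative trace $\partial_{x_d}^2u_f(\cdot,\ell)$: one must show that the associated operator-valued time kernel $\tau\mapsto\big(g\mapsto -a_{dd}\partial_{x_d}^2(J(\tau)(gR))(\cdot,\ell)/R\big)$ is integrable in operator norm near $\tau=0$, in spite of the strong singularity carried by normal second derivatives of the propagator at small time. This is exactly where the sharp $x_d$-regularity estimates, uniform over $(0,T)$, must be combined with the smoothing of the fractional (or, for $\alpha=1$, classical) resolvent $J$; once this integrability is secured the Volterra machinery closes and produces the bounded inverse $(I-\mathcal H)^{-1}$.

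Finally, for $m=1,n=1$ I would repeat the three steps verbatim, the only change being the boundary bookkeeping. Now $w=\partial_{x_d}u$ again satisfies Dirichlet conditions at $x_d=\pm\ell$ coming from \eqref{b1}, while on the lateral boundary, since $\partial_\nu$ commutes with $\partial_{x_d}$ there (the normal $\nu$ being horizontal), the condition $\partial_\nu u=0$ from \eqref{b2} with $n=1$ gives $\partial_\nu w=0$; hence $w$ satisfies configuration \emph{(H01)}. Consequently \emph{(H11)} plays the role of \emph{(H10)} for $u$ and \emph{(H01)} plays the role of \emph{(H00)} for $w$, and the remainder of the argument is unchanged.
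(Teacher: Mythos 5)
Your overall architecture coincides with the paper's: differentiate in $x_d$, use \eqref{t3a} and the boundary conditions to place $w=\partial_{x_d}u$ in the (H00) (resp.\ (H01)) configuration, evaluate the equation on the face $x_d=\ell$ to obtain $f=h+\mathcal Hf$ with $\mathcal Hf=-a_{dd}\partial_{x_d}^2u_f(\cdot,\ell)/R(\cdot,\ell)$, and invert $I-\mathcal H$ by Volterra machinery. However, there is a genuine gap exactly where you yourself locate ``the heart of the matter'': you never prove that the causal operator $\mathcal H$ has a weakly singular, hence integrable, time kernel; you only assert that the required small-time integrability ``must be combined with the smoothing of the resolvent'' and that ``once this integrability is secured the Volterra machinery closes.'' That estimate is the entire analytic content both of the boundedness $\mathcal H\in\mathcal B(L^2((0,T)\times\omega))$ and of the well-posedness of \eqref{t3c}. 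The paper closes it in two steps. First, since \eqref{t3a} forces the commutator $-[\partial_{x_d},\mathcal A]$ to act as a first-order operator in $w$ plus a first-order operator in $u$ (formulas \eqref{bb1}--\eqref{bb2}), $w$ satisfies the Volterra equation \eqref{int}, and an iterated-kernel fixed-point argument (Lemma \ref{l6}) gives the quantitative bound $\norm{w(t,\cdot)}_{H^1(\Omega)}\leq C\int_0^t(t-s)^{\frac{\alpha}{2}-1}\norm{f(s,\cdot)}_{L^2(\omega)}ds$, which controls in $L^2(\Omega)$ the source $(\partial_{x_d}\mathcal A)u_f$ hidden inside $\mathcal H$. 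Second, the Mittag-Leffler smoothing estimate $\norm{S(t)}_{\mathcal B(L^2(\Omega),D(A^r))}\leq Ct^{\alpha(1-r)-1}$ for $r\in(3/4,1)$, combined with the interpolation embedding $D(A^r)\hookrightarrow H^{2r}(\Omega)$ from (H00) and the boundedness of $y\mapsto a_{dd}\partial_{x_d}y(\cdot,\ell)/R(\cdot,\ell)$ from $H^{2r}(\Omega)$ to $L^2(\omega)$ (which is exactly why $r>3/4$ is needed), yields $\norm{\mathcal Hg(t,\cdot)}_{L^2(\omega)}\leq C\int_0^t(t-s)^{\alpha(1-r)-1}\norm{g(s,\cdot)}_{L^2(\omega)}ds$. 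Iterating this weakly singular kernel and applying the generalized Gronwall inequality (Lemma \ref{l11}) then gives both the contraction property of some power of the affine map and the a priori bound $\norm{f}_{L^2((0,T)\times\omega)}\leq C\norm{h}_{L^2((0,T)\times\omega)}$. Without these two quantitative inputs your Neumann-series or exponential-weight scheme has nothing to act on.

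A secondary weakness is the regularity step: you stay at the level of the Laplace variable and claim that ``tracking the $p$-dependence and inverting the Laplace transform'' yields $u\in H^3(-\ell,\ell;L^2((0,T)\times\omega))$. Pointwise-in-$p$ elliptic bounds do not transfer to $L^2$-in-time bounds by Laplace inversion without substantial additional work, since there is no Plancherel identity on the half-line. The paper uses the Laplace transform only to identify $\partial_{x_d}u$ with the solution of the time-domain integral equation \eqref{int}; all quantitative estimates, including the ones underlying \eqref{t3b} and the membership $u\in H^3(-\ell,\ell;L^2((0,T)\times\omega))$, are then derived in the time domain from the Mittag-Leffler representation, which is also where the bounds needed for the well-posedness originate. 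Your boundary bookkeeping for the case $m=n=1$ and the trace computation at $x_d=\ell$ are otherwise consistent with the paper.
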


\begin{rmk} Note that the data $h$ depends only on $\mathcal A$, $R(t,x',\ell)$ and $u(t,x',\ell)$, $(t,x')\in(0,T)\times\omega$. Indeed, thanks to the condition $\partial_{x_d}u(t,x',\ell)=0$,  the expression $(\mathcal A+a_{dd}\partial_{x_d}^2)u(t,x',\ell)$ depends only on $\mathcal A$ and $u(t,x',\ell)$, $(t,x')\in(0,T)\times\omega$. Therefore, assuming that $\mathcal A$ and $R$ are known, the result of Theorem \ref{t3} can be seen has a result of reconstruction of $f$ from the data $u(t,x',\ell)$, $(t,x')\in(0,T)\times\omega$.\end{rmk}

For problem \eqref{eq12}-\eqref{b4}, we consider first the following condition:\\
\ \\
$(\tilde{H})$ For all $v\in H^{\max(1,s)}(\tilde{\Omega})$ satisfying $\mathcal Av\in H^s(\tilde{\Omega})$, $s\in[0,2]$, and \eqref{b3}-\eqref{b4}, we have $v\in H^{2+s}(\Omega)$ and there exists $C>0$ depending only on $\mathcal A$, $s$ and $\Omega$ such that
$$\norm{ v}_{H^{2+s}(\Omega))}\leq C(\norm{\mathcal A v}_{H^s(\Omega))}+\norm{v}_{H^s(\Omega))}).$$
Assuming that $\omega$ is of class $\mathcal C^4$ and using  a separation of variable argument similar to \cite[Lemma 2.4]{GK}, one can check that \eqref{eli} implies $(\tilde{H})$.

Using $(\tilde{H})$ we obtain the following well-posedness result.

\begin{prop}\label{l1} 
Assume that $(\tilde{H})$ is fulfilled. Let $\gamma\in(0,1)$ be such that for $\alpha\in(0,1]$, $\gamma\in(1/2,1)$ and  for $\alpha\in(1,2)$, $\gamma\in(1/2,1/\alpha)$.  Fix $p\in(1,+\infty)$ such that $\frac{1}{p}<\min(1-\alpha\gamma,\alpha(1-\gamma))$ and let $F\in W^{1,p}(0,T; L^2(\tilde{\Omega}))\cap 
\mathcal C([0,T];H^{2\gamma}(\tilde{\Omega}))$ satisfy $F_{|t=0}=0$. 
Let $q\in \mathcal C^1([0,T]; L^\infty(\omega))\cap \mathcal C([0,T];W^{2,\infty}(\omega))$.  Then problem \eqref{eq12}-\eqref{b4} admits a unique weak  solution $u\in \mathcal C([0,T];H^{2(1+\gamma)}( \tilde{\Omega}))\cap \mathcal C^1([0,T];H^\gamma(\tilde{\Omega}))$.
\end{prop}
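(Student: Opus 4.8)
The plan is to reduce the well-posedness of \eqref{eq12}-\eqref{b4} to a linear fixed-point equation built on the solution operator $J(t)$ of the unperturbed problem \eqref{eq21}, treating the zeroth-order term $q$ as a Volterra correction. Throughout I set $X=\mathcal C([0,T];H^{2(1+\gamma)}(\tilde\Omega))\cap\mathcal C^1([0,T];H^\gamma(\tilde\Omega))$ and $Y=\{G\in W^{1,p}(0,T;L^2(\tilde\Omega))\cap\mathcal C([0,T];H^{2\gamma}(\tilde\Omega)):G_{|t=0}=0\}$, and I write $L$ for the map $LG(t)=\int_0^t J(t-s)G(s)ds$ and $M_q$ for multiplication by $q$. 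The mild formulation recalled just before the main results then reads $(I+LM_q)u=LF$. The two facts to establish are that $L:Y\to X$ is bounded (the linear maximal-regularity estimate) and that $M_q:X\to Y$ is bounded; granting these, $LM_q$ acts on $X$ and one inverts $I+LM_q$ by a Volterra/Neumann argument, so that $u=(I+LM_q)^{-1}LF\in X$ is the unique weak solution.

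First I would record the spectral form of $J(t)$. Since $(\tilde H)$ makes $\mathcal A$, equipped with \eqref{b3}-\eqref{b4}, a positive self-adjoint operator with compact resolvent for which elliptic regularity and interpolation identify $D(\mathcal A^{s})\simeq H^{2s}(\tilde\Omega)$, the functional calculus gives $J(t)=t^{\alpha-1}E_{\alpha,\alpha}(-t^\alpha\mathcal A)$, where $E_{\alpha,\alpha}$ is the Mittag-Leffler function, together with its time-primitive $S(t)=\mathcal A^{-1}(I-E_\alpha(-t^\alpha\mathcal A))$, satisfying $S'=J$ and $S(0)=0$. The bounds $\|\mathcal A^\beta E_\alpha(-t^\alpha\mathcal A)\|_{\mathcal B(L^2)}\le Ct^{-\alpha\beta}$ and $\|\mathcal A^\beta J(t)\|_{\mathcal B(L^2)}\le Ct^{\alpha(1-\beta)-1}$, valid for $0\le\beta\le1$, follow from $\sup_{x\ge0}x^\beta(1+x)^{-1}<\infty$. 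The $\gamma$-ranges in the statement guarantee $\alpha\gamma<1$ and $\alpha(1-\gamma)>0$, so both sides of $\tfrac1p<\min(1-\alpha\gamma,\alpha(1-\gamma))$ are positive.

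The heart of the argument is the estimate $L:Y\to X$, and here the hypothesis $G_{|t=0}=0$ is decisive. Integrating by parts in time and using $S(0)=G(0)=0$ yields $LG(t)=\int_0^t S(t-s)G'(s)ds$; applying $\mathcal A^{1+\gamma}$ and writing $\mathcal A^{1+\gamma}S(\tau)=\mathcal A^\gamma(I-E_\alpha(-\tau^\alpha\mathcal A))$, the telescoping $\int_0^t\mathcal A^\gamma G'(s)ds=\mathcal A^\gamma G(t)$ gives $\mathcal A^{1+\gamma}LG(t)=\mathcal A^\gamma G(t)-\int_0^t\mathcal A^\gamma E_\alpha(-(t-s)^\alpha\mathcal A)G'(s)ds$. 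The first term is controlled in $\mathcal C([0,T];L^2)$ by $\|G\|_{\mathcal C([0,T];H^{2\gamma})}$, and the second by $C\|(t-\cdot)^{-\alpha\gamma}\|_{L^{p'}(0,t)}\|G'\|_{L^p(0,T;L^2)}$, finite precisely because $\tfrac1p<1-\alpha\gamma$; this gives $LG\in\mathcal C([0,T];H^{2(1+\gamma)})$. Similarly $\partial_t LG(t)=\int_0^t J(t-s)G'(s)ds$, and $\mathcal A^\gamma\partial_t LG$ is bounded by $C\|(t-\cdot)^{\alpha(1-\gamma)-1}\|_{L^{p'}(0,t)}\|G'\|_{L^p(0,T;L^2)}$, finite exactly when $\tfrac1p<\alpha(1-\gamma)$, so $\partial_t LG\in\mathcal C([0,T];H^{2\gamma})\hookrightarrow\mathcal C([0,T];H^\gamma)$. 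Continuity in $t$ and the vanishing at $t=0$ follow by dominated convergence, and for $\alpha\in(1,2)$ one repeats the computation with the additional initial condition $\partial_t u(0)=0$ and the corresponding Mittag-Leffler indices. This step, with its fractional-power bookkeeping and the need to stay strictly away from the borderline exponents $\beta=0,1$ where logarithmic divergences appear, is the main obstacle; the two strict inequalities on $\tfrac1p$ are consumed exactly here.

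Granting the linear estimate, the remainder is comparatively routine. The bound $M_q:X\to Y$ rests on Sobolev multiplication: since $q=q(t,x')\in\mathcal C([0,T];W^{2,\infty}(\omega))$ and $2\gamma<2$, multiplication by $q(t,\cdot)$ preserves $H^{2\gamma}(\tilde\Omega)$, so $qu\in\mathcal C([0,T];H^{2\gamma})$ for $u\in X$; writing $\partial_t(qu)=(\partial_tq)u+q\,\partial_tu$ with $q\in\mathcal C^1([0,T];L^\infty(\omega))$ and $u\in X$ gives $qu\in W^{1,p}(0,T;L^2)$; and $(qu)_{|t=0}=q(0)u(0)=0$. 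Hence $LM_q\in\mathcal B(X)$, and by the strict inequalities the kernel estimates above yield, on the restriction $X_\tau$ of $X$ to $[0,\tau]$, a bound $\|LM_q\|_{\mathcal B(X_\tau)}\le C\tau^{\delta}$ for some $\delta>0$. A contraction argument on a short interval $[0,\tau]$, extended by finitely many identical steps to $[0,T]$ (equivalently, a convergent Neumann series for the Volterra operator $I+LM_q$), produces a unique $u\in X$ solving $(I+LM_q)u=LF$. Since the hypotheses on $F$ mean $F\in Y$, the linear estimate gives $LF\in X$, and this $u$ is the asserted unique weak solution.
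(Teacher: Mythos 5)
Your overall architecture --- the spectral representation $J(t)=t^{\alpha-1}E_{\alpha,\alpha}(-t^\alpha\mathcal A)$, the integration by parts in time using $G_{|t=0}=0$ to write $LG(t)=\mathcal A^{-1}G(t)-\int_0^t\mathcal A^{-1}E_{\alpha,1}(-(t-s)^\alpha\mathcal A)G'(s)\,ds$, the H\"older-in-time estimates against the Mittag--Leffler decay that consume exactly the two inequalities on $1/p$, and the Volterra fixed point for $q\neq0$ --- is the paper's proof. There is, however, one step that fails as written: the claim that ``elliptic regularity and interpolation identify $D(\mathcal A^{s})\simeq H^{2s}(\tilde\Omega)$,'' which you then invoke to bound the leading term via $\norm{\mathcal A^{\gamma}G(t)}_{L^2}\leq C\norm{G(t)}_{H^{2\gamma}}$. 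Only the embedding $D(\mathcal A^{s})\hookrightarrow H^{2s}(\tilde\Omega)$ holds (and that is all $(\tilde H)$ plus interpolation gives); the reverse inclusion is false once $2s>1/2$, because membership in $D(\mathcal A^{\gamma})$ with $\gamma\in(1/2,1)$ forces the Dirichlet condition on $\partial\omega\times(0,\ell)\cup\omega\times\{\ell\}$, while $F(t,\cdot)$ is only assumed to lie in $H^{2\gamma}(\tilde\Omega)$ with no boundary condition. So $\mathcal A^{\gamma}G(t)$ need not be defined, and the identity $\mathcal A^{1+\gamma}LG(t)=\mathcal A^{\gamma}G(t)-\int_0^t\mathcal A^{\gamma}E_{\alpha,1}(-(t-s)^\alpha\mathcal A)G'(s)\,ds$ cannot be applied term by term.

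The repair is precisely the move the paper makes: keep the splitting $LG=v+w$ with $v(t,\cdot)=\mathcal A^{-1}G(t,\cdot)$ and $w$ the convolution remainder, but estimate $v$ not through fractional powers of $\mathcal A$ but through the hypothesis $(\tilde H)$ applied to the elliptic boundary value problem $\mathcal A v(t,\cdot)=G(t,\cdot)$ with the homogeneous conditions \eqref{b3}--\eqref{b4}: this requires only $G(t,\cdot)\in H^{2\gamma}(\tilde\Omega)$ and yields $v(t,\cdot)\in H^{2(1+\gamma)}(\tilde\Omega)$ directly. (Note that $v(t,\cdot)\notin D(\mathcal A^{1+\gamma})$ in general, which is why the conclusion of the proposition is membership in $H^{2(1+\gamma)}(\tilde\Omega)$ rather than in $D(\mathcal A^{1+\gamma})$.) Your treatment of $w$, of $\partial_tLG$ (where the embedding is used in the harmless direction $D(\mathcal A^{\gamma})\hookrightarrow H^{2\gamma}$), and of the perturbation by $q$ is sound and matches the paper.
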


Applying this well-posedness result, we can state our second main result as 
follows.

\begin{thm}\label{t1} 
Assume that $(\tilde{H})$,  \emph{(H00)}, \eqref{co2} and \eqref{t3a} are fulfilled, $\alpha\in(0,1]$, $\omega$ is  $\mathcal C^4$ and $\gamma\in(3/4,1)$. 
Fix $p\in(1,+\infty)$ such that $\frac{1}{p}<\min(1-\alpha\gamma,\alpha(1-\gamma))$.  Let $q\in \mathcal C^1([0,T]; L^\infty(\omega))\cap \mathcal C([0,T];W^{2,\infty}(\omega))$, $f\in W^{1,p}(0,T; L^\infty(\omega))\cap \mathcal C([0,T];W^{2,\infty}(\omega))$ and 
$R\in W^{1,\infty}(0,T; L^2(\tilde{\Omega}))\cap 
\mathcal C([0,T]; H^{2\gamma}(\tilde{\Omega}))$ satisfy 
$R,\partial_{x_d}R\in L^\infty((0,T)\times\tilde{\Omega})$ and \eqref{t1a}. 
Assume also that
$$f(0,x)=0,\quad x\in\Omega$$
and let $u\in \mathcal C([0,T];H^{2(1+\gamma)}( \tilde{\Omega}))\cap \mathcal C^1([0,T];H^\gamma(\tilde{\Omega}))$ be the solution of \eqref{eq12}-\eqref{b4}.
Then, for $\delta\in \left(0,2\gamma-\frac{3}{4}\right)$, there exists a constant $C$  depending on  $q$,  $R$, $\tilde{\Omega}$, $T$, $\alpha$, $\mathcal A$, $\delta$, such that
 \begin{equation}\label{t1b}\norm{f}_{L^\infty(0,T;L^2(\omega))}\leq C(\norm{\partial_{x_d}u(\cdot,\cdot,\ell)}_{L^\infty(0,T;H^{\frac{3}{2}}(\omega))}+\norm{\partial_{x_d}u(\cdot,\cdot,\ell)}_{W^{1,\infty}(0,T;H^\delta(\omega))}).\end{equation}
\end{thm}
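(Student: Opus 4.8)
The plan is to recover $f$ pointwise from the trace of the equation on the top face $\{x_d=\ell\}$, to convert this into a linear operator equation for $f$ driven by the measured Neumann data, and to solve that equation by adapting the well-posedness mechanism of Theorem \ref{t3}. First I would use Proposition \ref{l1} to fix the solution $u\in\mathcal C([0,T];H^{2(1+\gamma)}(\tilde\Omega))\cap\mathcal C^1([0,T];H^\gamma(\tilde\Omega))$; since $\gamma>3/4$, the exponent $2(1+\gamma)>7/2$ makes the traces $\partial_{x_d}u(\cdot,\cdot,\ell)$ and $\partial_{x_d}^2u(\cdot,\cdot,\ell)$ well defined as $(t,x')$-functions in the spaces occurring in \eqref{t1b}. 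I would then evaluate \eqref{eq12} on $\{x_d=\ell\}$. Because $u(t,x',\ell)=0$ for all $(t,x')$, the fractional derivative $\partial_t^\alpha u$, the term $q\,u$ and every tangential derivative of $u$ vanish on this face; moreover \eqref{aa} gives $a_{jd}(x',\ell)=0$ for $j<d$, and since $a_{jd}(\cdot,\ell)$ vanishes identically on $\omega$ its tangential derivatives do too. Writing $\mathcal A u=-a_{dd}\partial_{x_d}^2u+\mathcal Bu$, one checks that the horizontal and the crossed contributions of $\mathcal Bu$ all drop out on the face, leaving the pointwise identity
\begin{equation*}
-a_{dd}(x',\ell)\,\partial_{x_d}^2u(t,x',\ell)-(\partial_{x_d}a_{dd})(x',\ell)\,\partial_{x_d}u(t,x',\ell)=f(t,x')\,R(t,x',\ell).
\end{equation*}

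Dividing by $R(\cdot,\cdot,\ell)$, which is legitimate by \eqref{t1a}, this reads $f=h+\mathcal Hf$, where the datum
\begin{equation*}
h:=-\frac{(\partial_{x_d}a_{dd})(\cdot,\ell)}{R(\cdot,\cdot,\ell)}\,\partial_{x_d}u(\cdot,\cdot,\ell)
\end{equation*}
is built from the measurement alone, while $\mathcal Hf:=-R(\cdot,\cdot,\ell)^{-1}a_{dd}(\cdot,\ell)\,\partial_{x_d}^2u[f](\cdot,\cdot,\ell)$ is the forward operator sending a source $fR$ to the second normal-derivative trace of the associated solution $u[f]$. The term $h$ is a bounded multiplication of the measured Neumann data, so $\|h\|_{L^\infty(0,T;L^2(\omega))}$ is dominated by the right-hand side of \eqref{t1b}; the whole problem is therefore reduced to inverting $I-\mathcal H$ and controlling it in the topology $L^\infty(0,T;L^2(\omega))$. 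To deal with the zeroth-order coefficient I would pass to the mild formulation $u[f](t,\cdot)=-\int_0^tJ(t-s)q(s)u[f](s,\cdot)\,ds+\int_0^tJ(t-s)(fR)(s,\cdot)\,ds$ of Section 1.5; since $q$ never appears in the trace identity (it is multiplied by $u(\cdot,\ell)=0$), it enters only through the interior solution and hence only through $\mathcal H$ and the final constant, which is why $C$ is allowed to depend on $q$. This also makes $\mathcal H$ a causal operator in time.

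The hard part is to show that $\mathcal H$ is bounded, and $I-\mathcal H$ boundedly invertible, on $L^\infty(0,T;L^2(\omega))$ despite the fact that a naive trace of $\partial_{x_d}^2u$ loses regularity. The gain that compensates this loss is the anisotropic $x_d$-regularity already exploited in Theorem \ref{t3}: condition \eqref{t3a} makes $\partial_{x_d}$ essentially commute with the horizontal part of $\mathcal A$, so one can differentiate the equation in $x_d$ and, using the elliptic regularity $(\tilde{H})$ together with the resolvent bounds for $(\mathcal A+p^\alpha)^{-1}$ from Definition \ref{d1}, obtain enough regularity of $u$ in the normal variable for $\partial_{x_d}^2u(\cdot,\cdot,\ell)$ to be a genuine $L^\infty(0,T;L^2(\omega))$-function depending boundedly on $f$. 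I expect this derivation of the sharp anisotropic and fractional maximal-regularity estimate for the second normal-derivative trace to be the principal obstacle: the two data norms in \eqref{t1b} are precisely calibrated to it, the spatial norm $H^{3/2}(\omega)$ supplying the tangential smoothness required by the trace estimate and the term $W^{1,\infty}(0,T;H^\delta(\omega))$ supplying the single time derivative needed to absorb the fractional-in-time part $\partial_t^\alpha u$, with the constraint $\delta\in(0,2\gamma-3/4)$ guaranteeing that both contributions are simultaneously available for $\gamma>3/4$ and $\alpha\le1$.

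Once $I-\mathcal H$ is inverted, I would conclude by writing $f=(I-\mathcal H)^{-1}h$ and estimating
\begin{equation*}
\|f\|_{L^\infty(0,T;L^2(\omega))}\le\|(I-\mathcal H)^{-1}\|\,\|h\|_{L^\infty(0,T;L^2(\omega))}\le C\big(\|\partial_{x_d}u(\cdot,\cdot,\ell)\|_{L^\infty(0,T;H^{3/2}(\omega))}+\|\partial_{x_d}u(\cdot,\cdot,\ell)\|_{W^{1,\infty}(0,T;H^\delta(\omega))}\big),
\end{equation*}
with $C$ inheriting its dependence on $q,R,\tilde\Omega,T,\alpha,\mathcal A,\delta$ from the norm of $(I-\mathcal H)^{-1}$ and from the multiplier bounds for $h$. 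The invertibility itself would follow either from the causal structure of $\mathcal H$ on $[0,T]$ or, more directly, by transporting the well-posedness statement of Theorem \ref{t3} — here in its Dirichlet-trace form, the roles of the homogeneous Neumann and Dirichlet conditions at $x_d=\ell$ being exchanged — into the present $\mathcal C([0,T];\cdot)$ functional framework.
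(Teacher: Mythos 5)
Your starting point coincides with the paper's: restrict \eqref{eq12} to the face $\{x_d=\ell\}$, use $u(t,\cdot,\ell)=0$ to kill $\partial_t^\alpha u$, $qu$ and the tangential terms, and read off $fR(\cdot,\cdot,\ell)$ as $a_{dd}\partial_{x_d}^2u$ plus first-order traces. The gap is in what you do next. You encode everything as $f=h+\mathcal Hf$ with $\mathcal Hf$ the second normal-derivative trace of the full solution $u[f]$ and propose to invert $I-\mathcal H$ on $L^\infty(0,T;L^2(\omega))$. But $\mathcal H$ so defined is not a Volterra operator with an integrable kernel in $f$: from $u[f](t)=\int_0^tS(t-s)(fR-qu)\,ds$, controlling $\partial_{x_d}^2u[f](t,\cdot,\ell)$ in $L^2(\omega)$ requires $u[f](t)\in H^{5/2+\epsilon}(\tilde\Omega)$, i.e.\ roughly $D(A^{5/4+\epsilon})$, and the smoothing bound for $S(t-s)$ then costs $(t-s)^{\alpha(1-(5/4+\epsilon))-1}$, a non-integrable power. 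So neither a contraction nor a causality/Neumann-series argument applies to your $\mathcal H$; asserting that $I-\mathcal H$ is boundedly invertible in this topology is essentially the theorem itself restated. You correctly name the rescue --- differentiate the equation in $x_d$ using \eqref{t3a} so that $v=\partial_{x_d}u$ solves a diffusion equation and only $2\gamma<2$ derivatives need to be gained --- but you do not carry it out, and the structural obstacle sits exactly there: unlike in Theorem \ref{t3}, where the Neumann condition at $x_d=\pm\ell$ gives $\partial_{x_d}u$ homogeneous Dirichlet data, here the condition $u(t,\cdot,\ell)=0$ forces $v$ to carry the measurement $\partial_{x_d}u(\cdot,\cdot,\ell)$ as \emph{inhomogeneous} Dirichlet data at $x_d=\ell$. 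The operator acting on $f$ therefore cannot be purely Volterra; it necessarily contains a measurement-driven part, which is why the paper obtains only a stability estimate here and why your suggestion to transport Theorem \ref{t3} with Neumann and Dirichlet roles exchanged does not go through.

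What the paper actually does, and what is missing from your proposal: extend $u$ evenly across $x_d=0$ (this uses \eqref{co2} and converts the mixed problem into a pure Dirichlet problem on $\Omega$, where (H00) applies), set $v=\partial_{x_d}\tilde u$, and split $v=v_1+v_2$, with $v_1$ carrying the boundary data $\pm\partial_{x_d}u(\cdot,\cdot,\ell)$ and $v_2$ carrying the source $-qv+H+f\partial_{x_d}\tilde R$. The trace $\partial_{x_d}v_1(\cdot,\cdot,\ell)$ is bounded by the measurement via a lifting $G$ with $\partial_t^\alpha G,\mathcal AG\in L^\infty(0,T;L^2)$ --- this is the actual origin of the two norms $L^\infty(0,T;H^{3/2}(\omega))$ and $W^{1,\infty}(0,T;H^\delta(\omega))$ in \eqref{t1b} --- while $\partial_{x_d}v_2(\cdot,\cdot,\ell)$ is bounded by $\int_0^t(t-s)^{\alpha(1-\gamma)-1}\bigl(\norm{f}_{L^\infty(0,s;L^2(\omega))}+\text{data}\bigr)ds$ using the smoothing of $S$ into $H^{2\gamma}$ with $2\gamma>3/2$. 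The resulting inequality for $\norm{f}_{L^\infty(0,t;L^2(\omega))}$ is closed by the fractional Gronwall inequality of Lemma \ref{l11}, not by inverting an operator equation. The reflection, the $v_1/v_2$ splitting with the lifting, and the Gronwall step are the substance of the proof and are absent from your outline.
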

Applying this result, we can also prove the stable recovery of 
the coefficient $q$ appearing in the problem

\begin{equation}\label{eq14}
\left\{\begin{aligned}\partial_t^\alpha v+\mathcal A v+q(t,x')v=0,\quad &(t,x)\in 
(0,T)\times \tilde{\Omega},\\  
v(t,x', \ell)=h_0(t,x')\quad &(t,x')\in(0,T)\times \omega,\\
v(t,x)=h_1(t,x) ,\quad &(t,x)\in(0,T)\times \partial\omega\times(0,\ell),\\ 
\partial_{x_d}v(t,x', 0)=0\quad &(t,x')\in(0,T)\times \omega,\\
v(0,x)=w_0,\quad &x\in\tilde{\Omega}\end{aligned}\right.
\end{equation}
with $\alpha\in(0,1]$, $h_k$,  $k=0,1$, $w_0$ such that there exists $H\in \mathcal C^1([0,T];H^{2+\gamma}(\tilde{\Omega}))\cap W^{{2},{p}}(0,T; H^{2}(\tilde{\Omega}))$ satisfying
\begin{equation}\label{trace}
\left\{\begin{aligned}
H(t,x', \ell)=h_0(t,x')\quad &(t,x')\in(0,T)\times \omega,\\
H(t,x)=h_1(t,x) ,\quad &(t,x)\in(0,T)\times \partial\omega\times(0,\ell),\\ 
\partial_{x_d}H(t,x', 0)=H(t,x', 0)=0\quad &(t,x')\in(0,T)\times \omega,\\
H(0,x)=w_0(x),\quad &x\in\tilde{\Omega},\\
\partial_t^\alpha H(0,x)=-\mathcal A H(0,x)-q(0,x')H(0,x),\quad &x=(x',x_d)
\in\tilde{\Omega}.\end{aligned}\right.
\end{equation}

The result for the determination of $q$ can be stated as follows.
\begin{cor}\label{c1} 
Let the condition of Theorem \ref{t1}  be fulfilled with 
$f=0$, $d\leq3$ and let  $h_k$,  $k=0,1$, $w_0$ be given by \eqref{trace}. Assume also that there exists $c>0$ such that 
the condition
\begin{equation}\label{t5a}|h_0(t,x')|\geq c,\quad (t,x')\in(0,T)\times\omega\end{equation}
is fulfilled. Fix $q_j\in   \mathcal C^1([0,T]; L^\infty(\omega))\cap \mathcal C([0,T];W^{2,\infty}(\omega))$, $j=1,2$, such that 
$$\norm{q_j}_{W^{1,\infty}(0,T; L^\infty(\omega))}+\norm{q_j}_{L^\infty(0,T;W^{2,\infty}(\omega))}\leq M,$$
$$q_1(0,x')=q_2(0,x')=q(0,x'),\quad x'\in\omega$$
and consider the solution $v_j$ of \eqref{eq14} with $q=q_j$.
 Then, for $\delta\in \left(0,2\gamma-\frac{3}{4}\right)$, we have
\begin{equation}\label{t4a}
\norm{q_1-q_2}_{L^\infty(0,T;L^2(\omega))}\leq C\left(\norm{\partial_{x_d}(v_1-v_2)_{|x_d=\ell}}_{L^\infty(0,T;H^{\frac{3}{2}}(\omega))}+\norm{\partial_{x_d}(v_1-v_2)_{|x_d=\ell}}_{W^{1,\infty}(0,T;H^\delta(\omega))}\right),
\end{equation}
where the constant $C$ depends on $\alpha$, $c$, $T$, $\Omega$, $a$, 
$\mathcal A$, $\alpha$, $M$, $h_0$, $h_1$, $w_0$, $\delta$.
\end{cor}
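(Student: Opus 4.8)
The plan is to reduce Corollary \ref{c1} to an application of Theorem \ref{t1} via a standard linearization. First I would set $v=v_1-v_2$ and compute the equation it satisfies. Since each $v_j$ solves \eqref{eq14} with $q=q_j$ and identical boundary/initial data $h_0,h_1,w_0$, subtracting gives
\begin{equation}\label{clin}
\partial_t^\alpha v+\mathcal A v+q_1(t,x')v=(q_2-q_1)(t,x')v_2,\quad (t,x)\in(0,T)\times\tilde{\Omega},
\end{equation}
with $v(t,x',\ell)=0$, $v=0$ on $(0,T)\times\partial\omega\times(0,\ell)$, $\partial_{x_d}v(t,x',0)=0$, and $v(0,x)=0$. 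Thus $v$ solves a problem of the type \eqref{eq12}-\eqref{b4} with coefficient $q=q_1$ and source $F=(q_2-q_1)v_2=:f\cdot R$, where I set $f:=q_2-q_1\in\mathcal C^1([0,T];L^\infty(\omega))\cap\mathcal C([0,T];W^{2,\infty}(\omega))$ (a function of $(t,x')$ only) and $R:=v_2$. The compatibility condition $f(0,\cdot)=q_2(0,\cdot)-q_1(0,\cdot)=0$ holds by assumption, so $F_{|t=0}=0$.

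Next I would verify that the hypotheses of Theorem \ref{t1} are met for this factorization. The coefficient $q_1$ satisfies the required regularity by assumption. For the source factor $R=v_2$, I need $R\in W^{1,\infty}(0,T;L^2(\tilde{\Omega}))\cap\mathcal C([0,T];H^{2\gamma}(\tilde{\Omega}))$ with $R,\partial_{x_d}R\in L^\infty((0,T)\times\tilde{\Omega})$, and the nondegeneracy \eqref{t1a}, namely $|R(t,x',\ell)|=|v_2(t,x',\ell)|=|h_0(t,x')|\geq c$, which is exactly \eqref{t5a}. The regularity of $v_2$ comes from Proposition \ref{l1}: since $h_k,w_0$ arise from an $H$ as in \eqref{trace}, I would lift the inhomogeneous boundary data by writing $v_2=H+\tilde v_2$, where $\tilde v_2$ has homogeneous boundary conditions and source $-\partial_t^\alpha H-\mathcal A H-q_2 H$; the compatibility relations in \eqref{trace} guarantee this source vanishes at $t=0$ and lies in the correct spaces, so Proposition \ref{l1} gives $v_2\in\mathcal C([0,T];H^{2(1+\gamma)}(\tilde{\Omega}))\cap\mathcal C^1([0,T];H^\gamma(\tilde{\Omega}))$. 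With $d\leq3$ and $\gamma>3/4$, the Sobolev embedding $H^{2(1+\gamma)}(\tilde{\Omega})\hookrightarrow W^{2,\infty}(\tilde{\Omega})$ (and a trace argument) supplies the required $L^\infty$ bounds on $R$ and $\partial_{x_d}R$, with all constants controlled by $M$, $h_0$, $h_1$, $w_0$ through $v_2$.

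Finally I would apply the stability estimate \eqref{t1b} of Theorem \ref{t1} to $v$ and $f=q_2-q_1$, yielding
\begin{equation}\label{capply}
\norm{q_1-q_2}_{L^\infty(0,T;L^2(\omega))}\leq C\left(\norm{\partial_{x_d}v(\cdot,\cdot,\ell)}_{L^\infty(0,T;H^{3/2}(\omega))}+\norm{\partial_{x_d}v(\cdot,\cdot,\ell)}_{W^{1,\infty}(0,T;H^\delta(\omega))}\right),
\end{equation}
which is precisely \eqref{t4a} after recalling $v=v_1-v_2$. The main obstacle I anticipate is not the linearization itself but the bookkeeping needed to confirm that $R=v_2$ genuinely lands in the function spaces demanded by Theorem \ref{t1}: one must trace through Proposition \ref{l1} and the embeddings (using $d\leq3$ and $\gamma>3/4$ in an essential way) to ensure $\partial_{x_d}R\in L^\infty$ and that the relevant norms of $v_2$ are finite and bounded in terms of the given data, so that the constant $C$ in \eqref{t4a} depends only on the stated quantities. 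A secondary point requiring care is that the nondegeneracy \eqref{t1a} for $R=v_2$ reduces exactly to \eqref{t5a} through the boundary condition $v_2(t,x',\ell)=h_0(t,x')$, which is what makes the factorization $F=f\cdot R$ admissible.
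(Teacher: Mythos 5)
Your proposal is correct and follows essentially the same route as the paper: the authors likewise set $u=v_1-v_2$, observe that it satisfies the equation with $q=q_1$, $f=q_2-q_1$ and $R=v_2$, establish the regularity and boundedness of $v_2$ and $\partial_{x_d}v_2$ by lifting the boundary data with $H$ from \eqref{trace} and invoking Proposition \ref{l1} together with the Sobolev embedding for $d\leq 3$, and then apply Theorem \ref{t1} using that \eqref{t5a} yields \eqref{t1a} for $R=v_2$. No substantive differences to report.
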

\subsection{Comments about our results}

To the best of our knowledge Theorem \ref{t3} and \ref{t1} are the first results  of recovery of a source term depending on both time and space variables for 
fractional diffusion equations of the form  \eqref{eq11} when $\alpha\neq1$. 
For $\alpha=1$, we refer to \cite[Section 6.3]{I} addressing this inverse 
problem with $\Omega$ corresponding to the half space and see also 
\cite{Bez}. In contrast to 
\cite{I}, we state our result on a bounded cylindrical domain and we restrict our analysis  to solutions lying in Sobolev spaces while \cite[Section 6.3]{I} is stated with H\"older continuous functions. Moreover 
our approach  admits a natural extension to fractional diffusion equations 
($\alpha\neq1$).
 
Let us remark that Theorem \ref{t3} gives a reconstruction algorithm for the recovery of the source term $f$ under consideration. It is actually stated as a well-posedness result for the pair of functions $(u,f)$ appearing in \eqref{eq11} and \eqref{source}. In contrast to Theorem \ref{t3}, Theorem  \ref{t1} provides only a stability estimate. However, Theorem  \ref{t1} can be applied to more general boundary conditions and it can also be applied to the stable recovery of a coefficient depending on both time and space variables (see Corollary \ref{c1}). 

Applying Theorem \ref{t1}, we prove in Corollary \ref{c1} the stable recovery of the coefficient of order zero $q$ provided $\partial_{x_d}q=0$. It seems that this result is the first result of stable recovery of a coefficient 
depending on both time and space variables for a fractional diffusion equation.
In \cite[Theorem 3.6]{GK}, the authors derived a similar result for the heat equation ($\alpha=1$) stated with stronger regularity conditions and measurements on both $\gamma\times (0,\ell)$ and $\omega\times\{\ell_1\}$, with $\gamma\subset\partial\omega$ and $\ell_1\in(0,\ell]$. Even, for $\alpha=1$, our result improves the one of \cite[Theorem 3.6]{GK} in terms of regularity conditions and restriction of the data.

Theorem \ref{t1} is stated only for $\alpha\in(0,1]$ but it can be extended 
to $\alpha\in(0,2)$ without any difficulty.
Indeed, one can easily extend our argumentation to the case $\alpha\in(1,2)$. In Theorem \ref{t1} we have restricted our analysis to $\alpha\in(0,1]$ in order to simplify the statement of this theorem and its proof.

The proof of Proposition \ref{l1} is based on properties of solutions of fraction diffusion and properties of Mittag-Leffler functions considered in several works like \cite{FK,KY,Ma,SY}.

\subsection{Outline}

This paper is organized as follows. In Section 2 we prove Theorem \ref{t3}. Section 3 is devoted to the proof of Theorem \ref{t1} and in Section 4 we consider 
the application of Theorem \ref{t1} stated in Corollary \ref{c1}. Finally, 
in the Appendix we recall and prove some results related to properties of solutions of fractional diffusion equations.
\section{Proof of Theorem \ref{t3}}

We start with  the first part of Theorem \ref{t3}. For this purpose, we assume that (H00), (H10), \eqref{t1a}-\eqref{t3a} are fulfilled and we will show \eqref{t3c}.  We denote by  $A$ (resp. $\tilde{A}$)  the operator $\mathcal A$ acting on $L^2(\Omega)$ with domain 
$$D(A):=\{g\in H^1(\Omega):\ \mathcal Ag\in L^2(\Omega),\  u_{|\partial\Omega}=0\}$$
$$(\textrm{resp. }D(\tilde{A}):=\{g\in H^1(\Omega):\ \mathcal Ag\in L^2(\Omega),\  u_{|\partial\omega\times(0,\ell)}=0,\partial_{x_d}u_{|x_d=\pm\ell}=0\}$$
Thanks to \eqref{aa} we know that $A$ (resp. $\tilde{A}$) are selfadjoint operators with a spectrum consisting of a non-decreasing sequence of non-negative eigenvalues $(\lambda_n)_{n\geq1}$ (resp. $(\tilde{\lambda}_n)_{n\geq1}$). Moreover, conditions (H00) and (H10) imply that $D(A)$ and $D(\tilde{A})$ embedded continuously into $H^2(\Omega)$. Let us also introduce 
an orthonormal  basis in the Hilbert space $L^2(\Omega)$ of eigenfunctions $(\phi_n)_{n\geq1}$ (resp. $(\tilde{\phi}_n)_{n\geq1}$) of $A$ (resp. $\tilde{A}$) associated to the non-decreasing sequence of  eigenvalues $(\lambda_n)_{n\geq1}$ (resp. $(\tilde{\lambda}_n)_{n\geq1}$). We consider also the operator valued function $S(t)$ (resp. $\tilde{S}(t)$) defined by
$$S(t)h=\sum_{n=1}^\infty t^{\alpha-1}E_{\alpha,\alpha}(-\lambda_nt^\alpha)\left\langle h,\phi_n\right\rangle_{L^2(\Omega)}\phi_n,\quad h\in L^2(\Omega),$$
$$\tilde{S}(t)h=\sum_{n=1}^\infty t^{\alpha-1}E_{\alpha,\alpha}(-\tilde{\lambda}_nt^\alpha)\left\langle h,\tilde{\phi}_n\right\rangle_{L^2(\Omega)}\tilde{\phi}_n,\quad h\in L^2(\Omega),$$
where $E_{\alpha,\alpha}$ corresponds to the Mittag-Leffler function given by  
$$E_{\alpha,\alpha}(z)=\sum_{n=0}^\infty \frac{z^n}{\Gamma(\alpha(n+1))},\quad z\in\mathbb C.$$
Following \cite{KY,SY} (see also Lemma \ref{l2}), one can check that problem \eqref{eq11} admits a unique weak solution $u\in  L^2(0,T;D(\tilde{A}))$ taking the form
$$u(t,\cdot)=\int_0^t\tilde{S}(t-s)F(s,\cdot)ds,\quad t\in(0,T).$$
Recall that the function $v$  given by Definition \ref{d1} takes the form
$$v(t,\cdot)=\int_0^{\min(t,T)}\tilde{S}(t-s)F(s,\cdot)ds,\quad t\in(0,+\infty).$$
Moreover, using the fact that $D(\tilde{A})$ embedded continuously into $H^2(\Omega)$, we deduce by interpolation that for $s_1\in\left(\frac{3}{4},1\right)$,  $D(\tilde{A}^{s_1})$ embedded continuously into $H^{2s_1}(\Omega)$. Therefore, applying Lemma \ref{l10} (see the Appendix), one can check that   $\inf\{\epsilon>0:\ e^{-\epsilon t}v\in L^1(\R^+; H^{2s_1}(\Omega))\}=0$.
Fixing $w=\partial_{x_d}u$ we deduce that, for all $p>0$, the Laplace transform in time  $W(p)$ of  the extension of $w$ to $\R_+\times\Omega$ given by $w=\partial_{x_d}v$, with $v$ defined in Definition \ref{d1}, is lying in $H^{2s_1-1}(\Omega)$ and it satisfies 
\[\left\{\begin{aligned}(\mathcal A +p^\alpha)W(p)=-(\partial_{x_d}\mathcal A) V(p)+\partial_{x_d}\hat{F}(p),\quad &\textrm{in }\Omega,\\  W(p)=0,\quad &\textrm{on }\partial\Omega.\end{aligned}\right.\]
Note that here we use the fact that $W(p)\in H^{2s_1-1}(\Omega)$, with $2s_1-1>\frac{1}{2}$, for defining its trace on $\partial\Omega$. Moreover, applying Lemma \ref{l10} (see the Appendix) and the fact that $D(A^{1/2})=H^1_0(\Omega)$, we can extend $S(t)$, $t>0$, to a bounded operator from $H^{-1}(\Omega)$ to $L^2(\Omega)$ satisfying
$$\norm{S(t)}_{\mathcal B(H^{-1}(\Omega),L^2(\Omega))}\leq Ct^{\alpha/2-1}.$$
In addition, using the fact that $v\in L^2_{loc}(\R^+;D(A))\subset L^2_{loc}(\R^+;H^2(\Omega))$, we deduce that $(\partial_{x_d}\mathcal A) v\in L^2_{\textrm{loc}}(\R^+;L^2(\Omega))$. Thus, extending $F$ by zero to $(0,+\infty)\times\Omega$ and using the fact that  $$\inf\{\epsilon>0:\ e^{-\epsilon t}(\partial_{x_d}\mathcal A)v\in L^1(\R^+; H^{-1}(\Omega))\}=0,$$
we deduce that the function
$$\begin{aligned}w_1(t,\cdot)&:=\int_0^tS(t-s)[-(\partial_{x_d}\mathcal A)v(s,\cdot))+\partial_{x_d}F(s,\cdot)]ds\\
\ &=-\int_0^tS(t-s_1)(\partial_{x_d}\mathcal A)\left(\int_0^{\min(s_1,T)}\tilde{S}(s_1-s_2)f(s_2,\cdot)R(s_2,\cdot)ds_2\right)ds_1+\int_0^tS(t-s)f(s,\cdot)\partial_{x_d}R(s,\cdot)ds.\end{aligned}$$
is well defined and the Laplace transform in time of $w_1$ coincide with the one of $w$\footnote{ With the additional assumptions (H10),   (H11) and the fact that $\omega$ is $\mathcal C^2$ one can extend these arguments to problem \eqref{eq11}-\eqref{b2}, with $m=n=1$, by using the fact that, for any $s_1\in (0,1/2)$,  $\mathcal C^\infty_0(\Omega)$ is dense in $H^{s_1}(\Omega)$.}. This proves that
\begin{equation}\label{identity}\partial_{x_d}u=-\int_0^tS(t-s_1)(\partial_{x_d}\mathcal A)\left(\int_0^{s_1}\tilde{S}(s_1-s_2)f(s_2,\cdot)R(s_2,\cdot)ds_2\right)ds_1+\int_0^tS(t-s)f(s,\cdot)\partial_{x_d}R(s,\cdot)ds.\end{equation}
In view of \eqref{t3a},  for $w=\partial_{x_d}u$, we have 
\begin{equation}\label{identity4}-(\partial_{x_d}\mathcal A) u=\partial_{x_d}(\partial_{x_d}a_{dd}\partial_{x_d}u)+\sum_{j=1}^{d-1}[\partial_{x_j}(\partial_{x_d}a_{jd}\partial_{x_d}u)+\partial_{x_d}(\partial_{x_d}a_{jd}\partial_{x_j}u)]=B_1w+B_2u,\end{equation}
where 
\begin{equation}\label{bb1}B_1w:=\partial_{x_d}a_{dd}\partial_{x_d}w+2\sum_{j=1}^{d-1}\partial_{x_d}a_{jd}\partial_{x_j}w+\left(\sum_{j=1}^d\partial_{x_j}\partial_{x_d}a_{jd}\right)w,\end{equation}
\begin{equation}\label{bb2}B_2u=\sum_{j=1}^d(\partial_{x_d}^2a_{jd})\partial_{x_j}u.\end{equation}
Then, from \eqref{identity} and the above arguments, we deduce that $w$ solves the integral equation
\begin{equation}\label{int}\begin{aligned}w(t,\cdot)=&\int_0^tS(t-s)B_1w(s,\cdot)ds+\int_0^tS(t-s_1)B_2\left(\int_0^{s_1}\tilde{S}(s_1-s_2)f(s_2,\cdot)R(s_2,\cdot)ds_2\right)ds_1\\
\ &+\int_0^tS(t-s)f(s,\cdot)\partial_{x_d}R(s,\cdot)ds,\quad t\in(0,T).\end{aligned}\end{equation}

Now let us consider the following.

\begin{lem}\label{l6} The integral equation \eqref{int} admits a unique solution $w\in L^2(0,T;H^2(\Omega))$ satisfying
\begin{equation}\label{l6a}\norm{w(t,\cdot)}_{H^1(\Omega)}\leq C\int_0^t(t-s)^{\frac{\alpha}{2}-1}\norm{f(s,\cdot)}_{L^2(\omega)},\quad t\in(0,T),\end{equation}
with $C>0$ depending on $R$, $\mathcal A$, $T$. Moreover, we have $\partial_t^\alpha w\in L^2(Q)$.

\end{lem}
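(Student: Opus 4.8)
The plan is to read \eqref{int} as a linear Volterra integral equation $w=\mathcal{K}w+G$ of the second kind with a weakly singular kernel, where $\mathcal{K}w(t,\cdot)=\int_0^tS(t-s)B_1w(s,\cdot)\,ds$ and $G$ collects the two remaining terms, which depend only on the data $f$. I would proceed in four steps: (i) establish the smoothing bounds for $S$ and $\tilde S$ in the relevant Sobolev scales; (ii) solve $w=\mathcal{K}w+G$ in $L^2(0,T;H^1(\Omega))$ via a Neumann series; (iii) deduce the pointwise bound \eqref{l6a} from a generalized Gronwall inequality; and (iv) bootstrap to $H^2$-regularity and to $\partial_t^\alpha w\in L^2(Q)$ by recognizing $w$ as the weak solution of a fractional diffusion equation with an $L^2$ source.

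For step (i), using the spectral representations of $S(t)$ and $\tilde S(t)$ together with the decay estimate $|E_{\alpha,\alpha}(-x)|\leq C(1+x)^{-1}$ for $x\geq0$ and the identifications $D(A^{1/2})=H^1_0(\Omega)$ and $D(\tilde A^{1/2})\hookrightarrow H^1(\Omega)$, I would prove that $\norm{S(t)}_{\mathcal B(L^2(\Omega),H^1(\Omega))}+\norm{\tilde S(t)}_{\mathcal B(L^2(\Omega),H^1(\Omega))}\leq Ct^{\alpha/2-1}$, the exponent arising from maximizing $y(1+y)^{-2}$ over $y=\lambda_nt^\alpha\geq0$. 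Since $B_1$ and $B_2$ have $\mathcal C^1$ coefficients and involve only first-order derivatives, they map $H^1(\Omega)$ boundedly into $L^2(\Omega)$. Combining these facts, observing that the inner integral $\int_0^{s_1}\tilde S(s_1-s_2)f(s_2,\cdot)R(s_2,\cdot)\,ds_2$ is exactly $u(s_1,\cdot)$, and using the Beta-function identity $\int_s^t(t-r)^{\alpha/2-1}(r-s)^{\alpha/2-1}\,dr=B(\tfrac\alpha2,\tfrac\alpha2)(t-s)^{\alpha-1}$ together with $(t-s)^{\alpha-1}\leq T^{\alpha/2}(t-s)^{\alpha/2-1}$, I would bound $\norm{G(t,\cdot)}_{H^1(\Omega)}\leq C\int_0^t(t-s)^{\alpha/2-1}\norm{f(s,\cdot)}_{L^2(\omega)}\,ds$. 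By Young's convolution inequality, since $t^{\alpha/2-1}\in L^1(0,T)$, this yields $G\in L^2(0,T;H^1(\Omega))$.

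For step (ii), the iterated-kernel estimate $\norm{\mathcal K^nw(t,\cdot)}_{H^1(\Omega)}\leq C^n\Gamma(\tfrac\alpha2)^n\Gamma(\tfrac{n\alpha}2)^{-1}\int_0^t(t-s)^{n\alpha/2-1}\norm{w(s,\cdot)}_{H^1(\Omega)}\,ds$, obtained by induction from the same Beta-function computation, gives $\norm{\mathcal K^n}_{\mathcal B(L^2(0,T;H^1(\Omega)))}\leq(C\Gamma(\tfrac\alpha2)T^{\alpha/2})^n/\Gamma(\tfrac{n\alpha}2+1)\to0$, so $I-\mathcal K$ is boundedly invertible and $w=(I-\mathcal K)^{-1}G$ is the unique solution in $L^2(0,T;H^1(\Omega))$. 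Writing $\psi(t)=\norm{w(t,\cdot)}_{H^1(\Omega)}$, which satisfies $\psi(t)\leq a(t)+C\int_0^t(t-s)^{\alpha/2-1}\psi(s)\,ds$ with $a$ the source bound from step (i), I would apply the generalized Gronwall (Henry) inequality and check that the resolvent kernel preserves the singularity $(t-s)^{\alpha/2-1}$ (again via Beta functions and $t\leq T$); this produces exactly \eqref{l6a}.

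For step (iv), I would rewrite the solution as $w(t,\cdot)=\int_0^tS(t-s)G_0(s,\cdot)\,ds$ with $G_0=B_1w+B_2u+f\,\partial_{x_d}R$; since $w\in L^2(0,T;H^1(\Omega))$ and $u\in L^2(0,T;D(\tilde A))\hookrightarrow L^2(0,T;H^2(\Omega))$, one has $G_0\in L^2(Q)$. This representation identifies $w$ with the weak solution of $\partial_t^\alpha w+\mathcal A w=G_0$ under homogeneous Dirichlet conditions, the condition $w=0$ on $\partial\Omega$ following from $m=1$, $n=0$ exactly as in the derivation of \eqref{int}, and under homogeneous Cauchy data. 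The appendix well-posedness result together with the elliptic regularity $D(A)\hookrightarrow H^2(\Omega)$ guaranteed by (H00) then gives $w\in L^2(0,T;D(A))\hookrightarrow L^2(0,T;H^2(\Omega))$ and $\partial_t^\alpha w,\mathcal A w\in L^2(Q)$. The main obstacle is the weakly singular Volterra structure: both the convergence of the Neumann series and the Gronwall step hinge on tracking the $(t-s)^{\alpha/2-1}$ singularities through repeated convolutions, and on the smoothing estimates of step (i) holding uniformly for all $\alpha\in(0,2)$.
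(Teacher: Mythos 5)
Your proposal is correct and follows essentially the same route as the paper: the same smoothing bounds for $S$ and $\tilde S$ via the Mittag--Leffler decay and the embeddings $D(A^{1/2}),D(\tilde A^{1/2})\hookrightarrow H^1(\Omega)$, the same iterated weakly singular kernel estimates to invert $I-\mathcal K$ (the paper phrases this as a contraction of a high enough power of the fixed-point map rather than a Neumann series), a singular Gronwall inequality for \eqref{l6a}, and the same bootstrap identifying $w=\int_0^tS(t-s)G_0(s)\,ds$ with $G_0\in L^2(Q)$ to get $H^2$-regularity and $\partial_t^\alpha w\in L^2(Q)$ from the appendix well-posedness lemma and (H00). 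The only cosmetic difference is that you invoke the Henry-type inequality directly where the paper first iterates the kernel to a bounded one and then applies a standard $L^2$ Gronwall lemma.
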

\begin{proof} We introduce the maps $\mathcal K_1,\ \mathcal M:L^2(0,T;H^1(\Omega))\longrightarrow L^2(0,T;H^1(\Omega))$, $\mathcal K_2:L^2((0,T)\times\omega))\longrightarrow L^2(0,T;H^1(\Omega))$ defined by
$$\mathcal K_1v(t,\cdot):=\int_0^tS(t-s)B_1v(s,\cdot)ds,$$
$$\mathcal K_2f(t,\cdot):=\int_0^tS(t-s_1)B_2\left(\int_0^{s_1}\tilde{S}(s_1-s_2)f(s_2,\cdot)R(s_2,\cdot)ds_2\right)ds_1+\int_0^tS(t-s)f(s,\cdot)\partial_{x_d}R(s,\cdot)ds$$
and $\mathcal Mv=\mathcal K_1v+\mathcal K_2f$. Applying Lemma \ref{l10} (see the Appendix) and the fact that both $D(A^{\frac{1}{2}})$ and $D(\tilde{A}^{\frac{1}{2}})$ embedded continuously into $H^1(\Omega)$,  we find
\begin{equation}\label{l6b}\norm{\mathcal K_1v(t,\cdot)}_{H^1(\Omega)}\leq C\norm{\mathcal K_1v(t,\cdot)}_{D(A^{\frac{1}{2}})}\leq C\int_0^t\frac{(t-s)^{\frac{\alpha}{2}-1}}{\Gamma(\alpha/2)}\norm{v(s,\cdot)}_{H^1(\Omega)}ds,\quad t\in(0,T)\end{equation}
\begin{equation}\label{l6c}\begin{aligned}\norm{\mathcal K_2f(t,\cdot)}_{H^1(\Omega)}&\leq C\left(\int_0^t(t-s)^{\frac{\alpha}{2}-1}\norm{f(s,\cdot)}_{L^2(\omega)}ds+\int_0^t\int_0^{s_1}(t-s_1)^{\frac{\alpha}{2}-1}(s_1-s_2)^{\frac{\alpha}{2}-1}\norm{f(s_2,\cdot)}_{L^2(\omega)}ds_2ds_1\right)\\
\ &\leq C\int_0^t(t-s)^{\frac{\alpha}{2}-1}\norm{f(s,\cdot)}_{L^2(\omega)}ds,\quad t\in(0,T).\end{aligned}\end{equation}
Following the proof of \cite[Proposition 1]{FK}, for $n\in\mathbb N$, we find by iteration
\begin{equation}\label{l6d}\norm{\mathcal K_1^nv(t,\cdot)}_{H^1(\Omega)}\leq  C^n\int_0^t\frac{(t-s)^{\frac{n\alpha}{2}-1}}{\Gamma(n\alpha/2)}\norm{v(s,\cdot)}_{H^1(\Omega)}ds,\quad t\in(0,T)\end{equation}
\begin{equation}\label{l6e}\norm{\mathcal K_1^{n-1}\mathcal K_2f(t,\cdot)}_{H^1(\Omega)}\leq  C^n\int_0^t\frac{(t-s)^{\frac{n\alpha}{2}-1}}{\Gamma(n\alpha/2)}\norm{f(s,\cdot)}_{L^2(\omega)}ds,\quad t\in(0,T).\end{equation}
It follows that for $n\in\mathbb N$ sufficiently large we have
$$\norm{\mathcal M^nv_1(t,\cdot)-\mathcal M^nv_2(t,\cdot)}_{H^1(\Omega)}\leq C^n\int_0^t\frac{(t-s)^{\frac{n\alpha}{2}-1}}{\Gamma(n\alpha/2)}\norm{v_1(s,\cdot)-v_2(s,\cdot)}_{H^1(\Omega)}ds,\quad t\in(0,T)$$
and an application of the Young inequality for convolution product implies that
$$\begin{aligned}\norm{\mathcal M^nv_1-\mathcal M^nv_2}_{L^2(0,T;H^1(\Omega))}&\leq C^n\left(\int_0^T\frac{t^{\frac{n\alpha}{2}-1}}{\Gamma(n\alpha/2)}dt\right)\norm{v_1-v_2}_{L^2(0,T;H^1(\Omega))}\\
\ &\leq \frac{C^nT^{\frac{n\alpha}{2}}}{\frac{n\alpha}{2}\Gamma(n\alpha/2)}\norm{v_1-v_2}_{L^2(0,T;H^1(\Omega))}\\
\ &\leq \frac{C^nT^{\frac{n\alpha}{2}+1}}{\Gamma(n\alpha/2+1)}\norm{v_1-v_2}_{L^2(0,T;H^1(\Omega))}.\end{aligned}$$
Then, using the fact that 
$$\lim_{n\to+\infty}\frac{C^nT^{\frac{n\alpha}{2}+1}}{\Gamma(n\alpha/2+1)}=0,$$
we deduce that there exists $n_0\in\mathbb N$ such that $\mathcal M^{n_0}$ is a contraction. Moreover, conditions \eqref{l6d}-\eqref{l6e} imply
$$\begin{aligned}\norm{\mathcal M^nv}_{L^2(0 ,T;H^1(\Omega))}&\leq \frac{C^nT^{\frac{n\alpha}{2}+1}}{\Gamma(n\alpha/2+1)}\norm{v}_{L^2(0,T;H^1(\Omega))}+\left(\sum_{k=1}^{n} \frac{C^kT^{\frac{k\alpha}{2}+1}}{\Gamma(k\alpha/2+1)}\right)\norm{f}_{L^2((0,T)\times\omega)}\\
\ &\leq \frac{C^nT^{\frac{n\alpha}{2}+1}}{\Gamma(n\alpha/2+1)}\norm{v}_{L^2(0,T;H^1(\Omega))}+\left(\sum_{k=1}^{\infty} \frac{C^kT^{\frac{k\alpha}{2}+1}}{\Gamma(k\alpha/2+1)}\right)\norm{f}_{L^2((0,T)\times\omega)}.\end{aligned}$$
Therefore, by eventually increasing the size of $n_0$, we deduce that $\mathcal M^{n_0}$ admits a unique fixed point $w\in L^2(0,T;H^1(\Omega))$ which by uniqueness of this fixed point is also a fixed point of $\mathcal M$. Moreover, in view of \eqref{l6d}-\eqref{l6e}, for $n_1\in\mathbb N$ satisfying $n_1\geq \frac{4}{\alpha}$ and for a.e $t\in(0,T)$, we have
$$\begin{aligned}\norm{w(t,\cdot)}_{H^1(\Omega))}&=\norm{\mathcal M^{n_1}w(t,\cdot)}_{H^1(\Omega))}\\
\ &\leq C\left(\int_0^t(t-s)^{\frac{n_1\alpha}{2}-1}\norm{w(s,\cdot)}_{H^1(\Omega)}ds+\int_0^t(t-s)^{\frac{\alpha}{2}-1}\norm{f(s,\cdot)}_{L^2(\omega)}ds\right)\\
\ &\leq C\left(T^{\frac{n_1\alpha}{2}-1}\int_0^t\norm{w(s,\cdot)}_{H^1(\Omega)}ds+\int_0^t(t-s)^{\frac{\alpha}{2}-1}\norm{f(s,\cdot)}_{L^2(\omega)}ds\right).\end{aligned}$$
Therefore, applying Gronwall inequality for function lying in $L^2(0,T)$ (see e.g \cite[Lemma 6.3]{BHKY}), we find
$$\norm{w(t,\cdot)}_{H^1(\Omega))}\leq C\left(\int_0^t(t-s)^{\frac{\alpha}{2}-1}\norm{f(s,\cdot)}_{L^2(\omega)}ds\right)e^{Ct},\quad t\in(0,T),$$
which clearly implies \eqref{l6a}. Finally, using the fact that
$$B_1w, s_1\mapsto B_2\left(\int_0^{s_1}\tilde{S}(s_1-s_2)f(s_2,\cdot)R(s_2,\cdot)ds_2\right), \partial_{x_d}F\in L^2(Q),$$
we deduce from Lemma \ref{l2} and assumption (H00), that $\mathcal M$ takes values in $L^2(0,T;H^2(\Omega))$ and therefore $w\in L^2(0,T;H^2(\Omega))$. In the same way, we prove that $\partial_t^\alpha w\in L^2(Q)$.\end{proof}

According to Lemma \ref{l6}, we have $u\in L^2(0,T;H^1(\Omega))$, $\mathcal A u,\ \partial_t^\alpha u\in L^2(Q)$ and $w=\partial_{x_d}u\in L^2(0,T;H^2(\Omega))$, $\partial_t^\alpha \partial_{x_d}u,\ \mathcal A\partial_{x_d}u\in L^2(Q)$. Combining this with \eqref{identity4}, we deduce that $$x_d\mapsto u(\cdot,\cdot,x_d)\in H^3((-\ell,\ell);L^2((0,T)\times\omega))\cap H^1(-\ell,\ell;L^2(0,T;H^2(\omega))),$$ 
$$x_d\mapsto\partial_t^\alpha u(\cdot,\cdot,x_d)\in H^1((-\ell,\ell);L^2((0,T)\times\omega)).$$  Then, \eqref{eq11} implies
\begin{equation}\label{identity2}\begin{aligned}a_{dd}\partial_{x_d}w(t,x',\ell)=a_{dd}\partial_{x_d}^2u(t,x',\ell)&=[\partial_t^\alpha u+(\mathcal A+a_{dd}\partial_{x_d}^2)u](t,x',\ell)-R(t,x',\ell)f(t,x')\\
\ &=R(t,x',\ell)[h(t,x')-f(t,x')],\quad (t,x')\in(0,T)\times\omega,\end{aligned}\end{equation}
with $h$ given by \eqref{t3b}. Fixing $r\in(3/4,1)$ and  $\tau_1:L^2(0,T;H^{2r}(\Omega))\ni y\longrightarrow \frac{a_{dd}\partial_{x_d}y(t,x',\ell)}{R(t,x',\ell)}\in L^2((0,T)\times\omega)$, we obtain 
$$f(t,\cdot)=h(t,\cdot)-\tau_1w(t,\cdot),\quad t\in(0,T).$$
Moreover, applying \eqref{identity4}, \eqref{int} and using the fact that $u,w\in L^2(0,T;H^1(\Omega))$, we get 
\begin{equation}\label{t3e}(\partial_{x_d}\mathcal A)\left(\int_0^{s_1}\tilde{S}(s_1-s_2)f(s_2,\cdot)R(s_2,\cdot)ds_2\right)ds_1=(\partial_{x_d}\mathcal A) u\in L^2(Q),\end{equation}
 which combined with (H00) implies 
\begin{equation}\label{identity3}\begin{aligned}f(t,\cdot)=&h(t,\cdot)+\tau_1\left(\int_0^tS(t-s_1)(\partial_{x_d}\mathcal A)\left(\int_0^{s_1}\tilde{S}(s_1-s_2)f(s_2,\cdot)R(s_2,\cdot)ds_2\right)ds_1\right)\\
&-\tau_1\left(\int_0^tS(t-s)f(s,\cdot)\partial_{x_d}R(s,\cdot)ds\right),\quad t\in(0,T).\end{aligned}\end{equation}
In view of (H00) by interpolation $D(A^r)$ embedded continuously into $H^{2r}(\Omega)$. Moreover, in view of Lemma \ref{l10} (see the Appendix), we have
$$\begin{aligned}&\norm{S(t-s_1)(\partial_{x_d}\mathcal A)\left(\int_0^{s_1}\tilde{S}(s_1-s_2)f(s_2,\cdot)R(s_2,\cdot)ds_2\right)}_{H^{2r}(\Omega)}\\
&\leq C\norm{S(t-s_1)(\partial_{x_d}\mathcal A)\left(\int_0^{s_1}\tilde{S}(s_1-s_2)f(s_2,\cdot)R(s_2,\cdot)ds_2\right)}_{D(A^r)}\\
&\leq C(t-s_1)^{\alpha(1-r)-1}\norm{(\partial_{x_d}\mathcal A)\left(\int_0^{s_1}\tilde{S}(s_1-s_2)f(s_2,\cdot)R(s_2,\cdot)ds_2\right)}_{L^2(\Omega)},\ s_1\in(0,t).\end{aligned}$$
On the other hand, applying Young inequality for convolution product, we obtain
$$\begin{aligned}&\int_0^T\int_0^t\norm{S(t-s_1)(\partial_{x_d}\mathcal A)\left(\int_0^{s_1}\tilde{S}(s_1-s_2)f(s_2,\cdot)R(s_2,\cdot)ds_2\right)}_{H^{2r}(\Omega)}ds_1dt\\
&\leq C\int_0^T\int_0^t(t-s_1)^{\alpha(1-r)-1}\norm{(\partial_{x_d}\mathcal A)\left(\int_0^{t}\tilde{S}(s_1-s_2)f(s_2,\cdot)R(s_2,\cdot)ds_2\right)}_{L^2(\Omega)}ds_1dt\\
&\leq CT^{1-r}\norm{(\partial_{x_d}\mathcal A)\left(\int_0^{t}\tilde{S}(s_1-s_2)f(s_2,\cdot)R(s_2,\cdot)ds_2\right)}_{L^1(0,T;L^2(\Omega))}\\
&\leq C\norm{(\partial_{x_d}\mathcal A)\left(\int_0^{t}\tilde{S}(s_1-s_2)f(s_2,\cdot)R(s_2,\cdot)ds_2\right)}_{L^2(Q)}<\infty.\end{aligned}$$
Thus, by Fubini theorem for a.e. $t\in(0,T)$, we have
$$s_1\longmapsto S(t-s_1)(\partial_{x_d}\mathcal A)\left(\int_0^{s_1}\tilde{S}(s_1-s_2)f(s_2,\cdot)R(s_2,\cdot)ds_2\right)\in L^1(0,t;H^{2r}(\Omega)).$$
In the same way, for a.e. $t\in(0,T)$, we get
$$s\longmapsto S(t-s)f(s,\cdot)\partial_{x_d}R(s,\cdot)\in L^1(0,t;H^{2r}(\Omega)).$$
Therefore, from \eqref{identity3},  for a.e. $t\in(0,T)$, we find 
\begin{equation}\label{identity5}\begin{aligned}f(t,\cdot)=&h(t,\cdot)+\int_0^t\tau_1S(t-s_1)(\partial_{x_d}\mathcal A)\left(\int_0^{s_1}\tilde{S}(s_1-s_2)f(s_2,\cdot)R(s_2,\cdot)ds_2\right)ds_1\\
&-\int_0^t\tau_1S(t-s)f(s,\cdot)\partial_{x_d}R(s,\cdot)ds.\end{aligned}\end{equation}
This proves \eqref{t3c}, let us prove that this problem is well-posed.  We fix the maps $\mathcal G$, $\mathcal H:L^2((0,T)\times\omega)\longrightarrow L^2((0,T)\times\omega)$, with
$$\begin{aligned}\mathcal H g(t,\cdot):=&\int_0^t\tau_1S(t-s_1)(\partial_{x_d}\mathcal A)\left(\int_0^{s_1}\tilde{S}(s_1-s_2)g(s_2,\cdot)R(s_2,\cdot)ds_2\right)ds_1\\
&-\int_0^t\tau_1S(t-s)g(s,\cdot)\partial_{x_d}R(s,\cdot)ds,\quad g\in L^2((0,T)\times\omega)\end{aligned}$$
and
$$\mathcal G g(t,\cdot):=h(t,\cdot)+\mathcal H g(t,\cdot),\quad g\in L^2((0,T)\times\omega).$$
Note that
\begin{equation}\label{t3h}\begin{aligned}\norm{\mathcal H g(t,\cdot)}_{L^2(\omega)}&\leq\int_0^t\norm{\tau_1S(t-s_1)(\partial_{x_d}\mathcal A)\left(\int_0^{s_1}\tilde{S}(s_1-s_2)g(s_2,\cdot)R(s_2,\cdot)ds_2\right)}_{L^2(\omega)}ds_1\\
&\ \ \ +\int_0^t\norm{\tau_1S(t-s)g(s,\cdot)\partial_{x_d}R(s,\cdot)}_{L^2(\omega)}ds\\
&\leq C\int_0^t\norm{S(t-s_1)(\partial_{x_d}\mathcal A)\left(\int_0^{s_1}\tilde{S}(s_1-s_2)g(s_2,\cdot)R(s_2,\cdot)ds_2\right)}_{H^{2r}(\Omega)}ds_1\\
&\ \ \ +C\int_0^t\norm{S(t-s)g(s,\cdot)\partial_{x_d}R(s,\cdot)}_{H^{2r}(\Omega)}ds\\
&\leq C(t^{\alpha(1-r)-1}\mathds{1}_{(0,+\infty)})*\norm{(\partial_{x_d}\mathcal A)\left(\mathds{1}_{(0,T)}(t)\int_0^{t}\tilde{S}(t-s)g(s,\cdot)R(s,\cdot)ds\right)}_{L^2(\Omega)}\\
&\ \ \ +C\norm{\partial_{x_d}R}_{L^\infty(Q)}(t^{\alpha(1-r)-1}\mathds{1}_{(0,+\infty)})*(\norm{g(\cdot,)}_{L^2(\omega)}\mathds{1}_{(0,T)}),\ t\in(0,T).\end{aligned}\end{equation}
On the other hand, in view of \eqref{identity4}, fixing $w_1\in L^2(0,T;H^1(\Omega))$ the solution of \eqref{int} with $f=g$ we obtain 
$$(-\partial_{x_d}\mathcal A)\left(\int_0^{t}\tilde{S}(t-s)g(s,\cdot)R(s,\cdot)ds\right)=B_1w_1(t,\cdot)+B_2\int_0^tS(t-s)gR(s,\cdot)ds,$$
where $B_1$ and $B_2$ are defined in formula \eqref{bb1}-\eqref{bb2}. Thus, applying Lemma \ref{l6} and Lemma \ref{l10} (see the Appendix), we obtain
$$\norm{(\partial_{x_d}\mathcal A)\left(\int_0^{t}\tilde{S}(t-s)g(s,\cdot)R(s,\cdot)ds\right)}_{L^2(\Omega)}\leq C\int_0^t(t-s)^{\frac{\alpha}{2}-1}\norm{g(s,\cdot)}_{L^2(\omega)}ds,\quad t\in(0,T).$$
Combining this with \eqref{t3h}, we get
$$\norm{\mathcal H g(t,\cdot)}_{L^2(\omega)}\leq C\int_0^t\frac{(t-s)^{\alpha(1-r)-1}}{\Gamma(\alpha(1-r))}\norm{g(s,\cdot)}_{L^2(\omega)}ds,\quad t\in(0,T).$$
By iteration, for all $n\in\mathbb N$, we deduce that 
$$\norm{\mathcal H^n g(t,\cdot)}_{L^2(\omega)}\leq C^n\int_0^t\frac{(t-s)^{n\alpha(1-r)-1}}{\Gamma(n\alpha(1-r))}\norm{g(s,\cdot)}_{L^2(\omega)}ds,\quad t\in(0,T)$$
and in a similar way to Lemma \ref{l6}, we deduce that there exists $n_2\in\mathbb N$ such that $\mathcal G^{n_2}$ is a contraction and  $\mathcal G$ admits a unique fixed point $f\in L^2((0,T)\times\omega)$ satisfying 
$$\norm{f(t,\cdot)}_{L^2(\omega)}=\norm{\mathcal Gf(t,\cdot)}_{L^2(\omega)}\leq \norm{h(t,\cdot)}_{L^2(\omega)}+C\int_0^t(t-s)^{\alpha(1-r)-1}\norm{f(s,\cdot)}_{L^2(\omega)}ds,\ t\in(0,T).$$
Therefore, in view of  Lemma \ref{l11} (see the Appendix), we have
$$\norm{f(t,\cdot)}_{L^2(\omega)}\leq C\left(\norm{h(t,\cdot)}_{L^2(\omega)}+\int_0^t(t-s)^{\alpha(1-r)-1}\norm{h(s,\cdot)}_{L^2(\omega)}ds\right),\ t\in(0,T),$$
and an application of the Young inequality yields
$$\norm{f}_{L^2((0,T)\times\omega)}\leq C\norm{h}_{L^2((0,T)\times\omega)}.$$
 This proves the well-posedness of \eqref{t3c} and the reconstruction of $f$ from the data $h$. Now let us consider the proof of the last part of Theorem \ref{t3}. For this purpose, we fix  $(h_1,f)\in L^2((0,T)\times\omega)\times L^2((0,T)\times\omega)$ satisfying \eqref{t3c} with $h=h_1$ and we consider $u\in L^2(0,T;H^1(\Omega))$ solving  \eqref{eq11}. Following the above argumentation, we can define $h\in L^2((0,T)\times\omega)$ given by \eqref{t3b} and $f$ solves \eqref{t3c}. This implies that
$h_1=f-\mathcal H f=h$. Therefore, we have $h=h_1$ and  the proof of Theorem \ref{t3} for  \eqref{eq11}-\eqref{b2}, with $m=1$ and $n=0$, is completed. Using similar arguments, one can check that this result is still true  for the problem \eqref{eq11}-\eqref{b2}, with $m=1$ and $n=1$.

\section{Proof of Theorem \ref{t1}}

This section is devoted to the proof of Theorem \ref{t1}. In contrast to the preceding section, for $u$ solving \eqref{eq12}-\eqref{b4},   $\partial_{x_d}u$ will not be a solution of an initial boundary value problem with homogeneous boundary condition. However, with suitable regularity conditions on $F$ we can consider the trace of $u$ at $\{x_d=\ell\}$. We will start by  proving Proposition \ref{l1}

\textbf{Proof of Proposition \ref{l1}.} We consider first the case $q=0$. Let $A$ be the operator $\mathcal A$ acting on $L^2(\tilde{\Omega})$ with domain 
$$D(A):=\{g\in H^1(\tilde{\Omega}):\ \mathcal Ag\in L^2(\tilde{\Omega}),\ u_{|\partial\omega\times(0,\ell)}=0,\ u_{|x_d=\ell}=0,\ \partial_{x_d}u_{|x_d=0}=0\}.$$
 The spectrum of $A$ consists of a non-decreasing sequence of strictly positive eigenvalues $(\lambda_n)_{n\geq1}$. Let us also introduce 
an orthonormal  basis in the Hilbert space $L^2(\tilde{\Omega})$ of eigenfunctions $(\phi_n)_{n\geq1}$ of $A$ associated with the non-decreasing sequence of  eigenvalues $(\lambda_n)_{n\geq1}$.  Then, for $n\in\mathbb N$, the solution of \eqref{eq12}-\eqref{b4} is given by
$$u_n(t):=\left\langle u(t,\cdot),\phi_n\right\rangle_{L^2(\tilde{\Omega})}=\int_0^t(t-s)^{\alpha-1}E_{\alpha,\alpha}(-\lambda_n(t-s)^\alpha)F_n(s)ds,$$
where $F_n(t):=\left\langle F(t,\cdot),\phi_n\right\rangle_{L^2(\tilde{\Omega})}$. Since $F_n\in W^{1,p}(0,T)$, $F_n(0)=0$, applying Lemma \ref{l12} (see the Appendix) and integrating by parts we find
$$u_n(t)=\lambda_n^{-1}F_n(t)-\lambda_n^{-1}\int_0^tE_{\alpha,1}(-\lambda_n(t-s)^\alpha)F_n'(s)ds.$$
Thus, we have $u=v+w$ where
$$v(t,\cdot)=A^{-1}F(t,\cdot),\quad w_n(t):=\left\langle w(t,\cdot),\phi_n\right\rangle_{L^2(\tilde{\Omega})}=-\lambda_n^{-1}\int_0^tE_{\alpha,1}(-\lambda_n(t-s)^\alpha)F_n'(s)ds.$$
Note that, for all $t\in[0,T]$, $v(t,\cdot)$ solves the boundary value problem
$$\left\{\begin{aligned}\mathcal A v(t,\cdot)=F(t,\cdot),\quad &\ \textrm{in }\tilde{\Omega},\\  
v(t,x)=0,\quad &x\in\partial\omega\times(0,\ell),\\
v(t,x',\ell)=0,\quad &x'\in\omega,\\
\partial_{x_d}v(t,x',0)=0,\quad &x'\in\omega.\end{aligned}\right.$$
Therefore, applying assumption $(\tilde{H})$ and the fact that $F\in\mathcal C([0,T];H^{2\gamma}(\tilde{\Omega}))$,  we deduce that $v\in \mathcal C([0,T];H^{2(1+\gamma)}(\tilde{\Omega}))$. Thus, in order to complete the proof, we only need to check that $w\in \mathcal C([0,T];H^{2(1+\gamma)}(\tilde{\Omega}))$. For this purpose, using the fact that $F_n'\in L^p(0,T)$ and $t\mapsto t^{\alpha-1}\in L^{p'}(0,T)$, with $1/p'=1-1/p$, one can check that $w_n\in\mathcal C([0,T])$, $n\in\mathbb N$. Moreover, fixing $m,n\in\mathbb N$, with $m<n$, and applying Lemma \ref{l10} (see the Appendix), we find
$$\begin{aligned}&\norm{\sum_{k=m}^n\lambda_k^{1+\gamma}w_k(t)\phi_k}_{L^2(\tilde{\Omega})}\\
&\leq \int_0^t\norm{\sum_{k=m}^n\lambda_k^{\gamma}E_{\alpha,1}(-\lambda_k(t-s)^\alpha)F_k'(s)\phi_k}_{L^2(\tilde{\Omega})}ds\\
&\leq C\int_0^t(t-s)^{-\alpha\gamma}\norm{\sum_{k=m}^nF_k'(s)\phi_k}_{L^2(\tilde{\Omega})}ds.\end{aligned}$$
On the other hand, since $\frac{1}{p}<1-\alpha\gamma$, we have $\frac{1}{p'}=1-\frac{1}{p}>\alpha\gamma$ and we deduce
$$\int_0^Ts^{-p'\alpha\gamma}ds<\infty.$$
Therefore, applying the H\"older inequality, we obtain
$$\norm{\sum_{k=m}^n\lambda_k^{1+\gamma}w_k(t)\phi_k}_{L^2(\tilde{\Omega})}\leq C\norm{\sum_{k=m}^nF_k'(s)\phi_k}_{L^p(0,T;L^2(\tilde{\Omega}))}.$$
Combining this with the fact that $F\in L^p(0,T;L^2(\tilde{\Omega}))$, we deduce that  
$$\lim_{m,n\to\infty}\norm{\sum_{k=m}^nw_k\phi_k}_{\mathcal C ([0,T];D(A^{1+\gamma}))}=0.$$
Thus, the sequence
$$\sum_{n\in\mathbb N}w_n\phi_n$$
is a Cauchy sequence and therefore a convergent sequence of $\mathcal C ([0,T];D(A^{1+\gamma}))$. Since this sequence converges to $w$ in the sense of $\mathcal C([0,T];L^2(\tilde{\Omega}))$, we deduce that $w\in\mathcal C ([0,T];D(A^{1+\gamma}))$ and, in view of $(\tilde{H})$, we deduce that $w\in \mathcal C([0,T];H^{2(1+\gamma)}(\tilde{\Omega}))$. Now let us prove that $u\in \mathcal C^1([0,T];H^{2\gamma}(\tilde{\Omega}))$.
Note first that $u_n\in\mathcal C^1([0,T])$ with
$$u_n'(t)=F_n(0)t^{\alpha-1}E_{\alpha,\alpha}(-\lambda_nt^\alpha)+\int_0^ts^{\alpha-1}E_{\alpha,\alpha}(-\lambda_ns^\alpha)F_n'(t-s)ds=\int_0^ts^{\alpha-1}E_{\alpha,\alpha}(-\lambda_ns^\alpha)F_n'(t-s)ds.$$
Here we have used the fact that $F_{|t=0}=0$. Repeating the above arguments and using the fact that $F\in W^{1,p}(0,T;L^2(\tilde{\Omega}))$ with $\frac{1}{p}<\alpha(1-\gamma)$, we deduce that $\partial_tu\in\mathcal C ([0,T];D(A^{\gamma}))\subset \mathcal C([0,T];H^{2\gamma}(\tilde{\Omega}))$. Therefore, we have $u\in \mathcal C^1([0,T];H^{2\gamma}(\tilde{\Omega}))$. 

Now let us consider the case $q\neq0$. We introduce the map
$$\mathcal G(v):=\int_0^tJ(t-s)F(s)ds-\int_0^tJ(t-s)q(s)v(s)ds$$
defined on $\mathcal C([0,T];H^{2(1+\gamma)}( \tilde{\Omega}))\cap \mathcal C^1([0,T];H^{2\gamma}(\tilde{\Omega}))$ with $J(t)$ given by
$$J(t)h:=\sum_{n=1}^\infty t^{\alpha-1}E_{\alpha,\alpha}(-\lambda_n t^\alpha)\left\langle h,\phi_n\right\rangle\phi_n,\quad h\in L^2(\tilde{\Omega}),\ t>0.$$
Then, using a classical fixed point argument combined with the preceding analysis, we deduce that $\mathcal G$ admits a unique fixed point lying in $\mathcal C([0,T];H^{2(1+\gamma)}( \tilde{\Omega}))\cap \mathcal C^1([0,T];H^{2\gamma}(\tilde{\Omega}))$ which will be the solution of \eqref{eq12}-\eqref{b4}. This completes the proof of the lemma.\qed

From now on and in all the remaining part of this section, we  assume that $\alpha\in(0,1]$ and that the conditions of Proposition \ref{l1} are fulfilled. We consider $\tilde{u}$ defined on $(0,T)\times\omega\times(-\ell,\ell)$ by
$$\tilde{u}(t,x',-x_d):=\tilde{u}(t,x',x_d)=u(t,x',x_d),\quad (t,x',x_d)\in(0,T)\times\omega\times(0,\ell).$$
Then, it is clear that $\tilde{u}_{|(0,T)\times \tilde{\Omega}}\in \mathcal C([0,T];H^{2(1+\gamma)}( \tilde{\Omega}))\cap \mathcal C^1([0,T];H^\gamma(\tilde{\Omega}))$ and
$\tilde{u}_{|(0,T)\times \omega\times(-\ell,0)}\in \mathcal C([0,T];H^{2(1+\gamma)}( \omega\times(-\ell,0)))\cap \mathcal C^1([0,T];H^\gamma(\omega\times(-\ell,0)))$.
Moreover, we have
$$\partial_{x_d}^{2k}\tilde{u}(t,x',0_+)=\partial_{x_d}^{2k}\tilde{u}(t,x',0_-),\quad \partial_{x_d}\tilde{u}(t,x',0_+)=0=\partial_{x_d}\tilde{u}(t,x',0_-),\quad (t,x')\in(0,T)\times\omega,\ k=0,1.$$
Therefore,  we deduce that $\tilde{u}\in \mathcal C([0,T];H^{3}(\Omega))\cap \mathcal C^1([0,T];H^{2\gamma}(\Omega))$ and, thanks to \eqref{co2}, $\tilde{u}$ solves
$$
\left\{\begin{aligned}\partial_t^\alpha \tilde{u}+\mathcal A \tilde{u}+q(t,x')\tilde{u}=f(t,x')\tilde{R}(t,x),\quad &(t,x',x_d)\in 
(0,T)\times \omega\times(-\ell,\ell),\\  
\tilde{u}(t,x)=0,\quad &(t,x)\in(0,T)\times \partial\omega\times(-\ell,\ell),\\ 
\tilde{u}(t,x',\pm\ell)=0\quad &(t,x')\in(0,T)\times \omega,\\
\partial_t^k\tilde{u}(0,x)=0,\quad &x\in\omega\times(-\ell,\ell),\ k=0,\ldots,m_\alpha,\end{aligned}\right.
$$
where $\tilde{R}$ is the extension of $R$ to $(0,T)\times\omega\times(-\ell,\ell)$ given by
$$\tilde{R}(t,x',x_d):=R(t,x',-x_d),\quad (t,x',x_d)\in(0,T)\times\omega\times(-\ell,0).$$
Fixing $v=\partial_{x_d}\tilde{u}$  and using the fact that $\tilde{u}\in \mathcal C([0,T];H^{3}(\Omega))\cap \mathcal C^1([0,T];H^{2\gamma}(\Omega))$, we deduce that $v$ solves the problem
\begin{equation}\label{eq2}
\left\{\begin{aligned}\partial_t^\alpha v+\mathcal A v+q(t,x')v=H(t,x',x_d)+f(t,x')\partial_{x_d}\tilde{R}(t,x),\quad &(t,x',x_d)\in 
(0,T)\times \omega\times(-\ell,\ell),\\  
v(t,x)=0,\quad &(t,x)\in(0,T)\times \partial\omega\times(-\ell,\ell),\\ 
v(t,x',\pm\ell)=\partial_{x_d}\tilde{u}(t,x',\pm\ell)=\pm\partial_{x_d}u(t,x',\ell),\quad &(t,x')\in(0,T)\times \omega,\\
\partial_t^kv(0,x)=0,\quad &x\in\omega\times(-\ell,\ell),\ k=0,\ldots,m_\alpha,\end{aligned}\right.
\end{equation}
with
$$H(t,x',x_d):= \sum_{i,j=1}^d\partial_{x_i}\left(\partial_{x_d}a_{ij}(x)\partial_{x_j}\tilde{u}\right).$$
Note that here since $R,\partial_{x_d}R\in L^\infty((0,T)\times\tilde{\Omega})$, we have $\tilde{R},\partial_{x_d}\tilde{R}\in L^\infty((0,T)\times\Omega)$.
We are now in position to prove Theorem \ref{t1}.

\textbf{Proof of Theorem \ref{t1}.} In all this proof $C$ denotes a generic constant depending on $\alpha$, $c$, $T$, $\Omega$,  $\mathcal A$, $q$,  $d$.
According to Lemma \ref{l1} the solution $u$ of \eqref{eq12}-\eqref{b4} is lying in $\mathcal C([0,T];H^{2(1+\gamma)}( \tilde{\Omega}))\cap \mathcal C^1([0,T];H^\gamma(\tilde{\Omega}))$ and $v=\partial_{x_d}\tilde{u}$, with $\tilde{u}$ the even extension of $u$ to $(0,T)\times\omega\times(-\ell,\ell)$, which solves \eqref{eq2}.
Note first that projecting the equation \eqref{eq12} in $(0,T)\times\omega\times\{\ell\}$ and using the fact that, for all $(t,x')\in(0,T)\times\omega$, we have $u(t,x',\ell)=0$, we deduce that
$$f(t,x')R(t,x',\ell)=[a_{d,d}\partial_{x_d}^2u+2\sum_{j=1}^{d-1}a_{jd}\partial_{x_j}\partial_{x_d}u+\sum_{j=1}^{d}\partial_{x_j}a_{jd}\partial_{x_d}u](t,x',\ell),\quad (t,x')\in(0,T)\times\omega.$$
Combining this with \eqref{t1a} and the fact that $v=\partial_{x_d}u$, for all $t\in(0,T)$, we deduce that 
 \begin{equation}\label{t1c}\begin{aligned}\norm{f(t,\cdot)}_{L^2(\omega)}&\leq c^{-1}C(\norm{\partial_{x_d}v(t,\cdot,\ell)}_{L^2(\omega)}+\norm{\partial_{x_d}u(t,\cdot,\ell)}_{H^1(\omega)})\\
\ &\leq C(\norm{\partial_{x_d}v(t,\cdot,\ell)}_{L^2(\omega)}+\norm{\partial_{x_d}u(t,\cdot,\ell)}_{H^{\frac{3}{2}}(\omega)}).\end{aligned}\end{equation}
Thus, the proof of \eqref{t1b} will be completed if we can derive a suitable estimate of $\norm{\partial_{x_d}v(t,\cdot,\ell)}_{L^2(\omega)}$.
For this purpose, we decompose $v$ into $v=v_1+v_2$, where $v_1$ solves
\begin{equation}\label{eq3}
\left\{\begin{aligned}\partial_t^\alpha v_1+\mathcal A v_1=0,\quad &\textrm{ in } 
(0,T)\times \omega\times(-\ell,\ell),\\  
v_1(t,x)=0,\quad &(t,x)\in(0,T)\times \partial\omega\times(-\ell,\ell),\\ 
v_1(t,x',\pm\ell)=\partial_{x_d}\tilde{u}(t,x',\pm\ell)=\pm\partial_{x_d}u(t,x',\ell),\quad &(t,x')\in(0,T)\times \omega,\\
v_1(0,x)=0,\quad &x\in\omega\times(-\ell,\ell),\end{aligned}\right.
\end{equation}
and $v_2$ solves
\begin{equation}\label{eq4}
\left\{\begin{aligned}\partial_t^\alpha v_2+\mathcal A v_2=-qv(t,x)+H(t,x',x_d)+f(t,x')\partial_{x_d}\tilde{R}(t,x),\quad &(t,x',x_d)\in 
(0,T)\times \omega\times(-\ell,\ell),\\  
v_2(t,x)=0,\quad &(t,x)\in(0,T)\times \partial\omega\times(-\ell,\ell),\\ 
v_2(t,x',\pm\ell)=\partial_{x_d}\tilde{u}(t,x',\pm\ell)=0,\quad &(t,x')\in(0,T)\times \omega,\\
v_2(0,x)=0,\quad &x\in\omega\times(-\ell,\ell).\end{aligned}\right.
\end{equation}
Since $u\in \mathcal C^1([0,T];H^{2\gamma}(\tilde{\Omega}))\cap \mathcal C([0,T];H^{2(1+\gamma)}(\tilde{\Omega}))$, we have $$(t,x')\mapsto \partial_{x_d}u(t,x',\ell)\in \mathcal C^1([0,T];H^{2\gamma-\frac{3}{2}}(\omega))\cap \mathcal C([0,T];H^{\frac{1}{2}+2\gamma}(\omega))$$ and $\partial_{x_d}u(0,\cdot,\ell)=0$. therefore, using a classical  lifting argument (e.g. \cite[Theorem 8.3, Chapter 1]{LM1}), we can find $G\in \mathcal C^1([0,T];H^{2\gamma-1}(\Omega))\cap \mathcal C([0,T];H^{2\gamma+1}(\Omega))$ such that 
$$G(t,x',\ell)=\partial_{x_d}u(t,x',\ell),\quad (t,x')\in(0,T)\times\omega,$$
$$G(0,x',x_d)=0,\quad (x',x_d)\in\omega\times(-\ell,\ell),$$
 \begin{equation}\label{t1d}\begin{aligned}&\norm{G}_{W^{1,\infty}(0,T;L^2(\omega\times(-\ell,\ell)))}+\norm{G}_{L^\infty(0,T;H^2(\omega\times(-\ell,\ell)))}\\
&\leq C(\norm{\partial_{x_d}u(\cdot,\cdot,\ell)}_{L^\infty(0,T;H^{\frac{3}{2}}(\omega))}+\norm{\partial_{x_d}u(\cdot,\cdot,\ell)}_{W^{1,\infty}(0,T;H^\delta(\omega))}).\end{aligned}\end{equation}
Therefore, we can decompose $v_1$ into $v_1=G+w_1$ with $w_1$ solving
$$\left\{\begin{aligned}\partial_t^\alpha w_1+\mathcal A w_1=G_1,\quad &\textrm{ in } 
(0,T)\times \omega\times(-\ell,\ell),\\  
w_1(t,x)=0,\quad &(t,x)\in(0,T)\times \partial\omega\times(-\ell,\ell),\\ 
w_1(t,x',\pm\ell)=0,\quad &(t,x')\in(0,T)\times \omega,\\
w_1(0,x)=0,\quad &x\in\omega\times(-\ell,\ell),\end{aligned}\right.$$
where $G_1=-\partial_t^\alpha G-\mathcal AG\in \mathcal C([0,T];L^2(\Omega))$. Thus, we have 
$$w_1(t,\cdot)=\int_0^tS(s)G_1(t-s)ds,$$
where  $S(t)$ corresponds to the operator valued function defined in the proof of Theorem \ref{t3}.
Therefore, applying (H00) and  Lemma \ref{l10} (see the Appendix), we deduce that  $w_1\in\mathcal C([0,T];H^{2\gamma}(\Omega))$ with
$$\norm{w_1}_{L^\infty(0,T;H^{2\gamma}(\Omega))}\leq C\left(\int_0^Ts^{\alpha(1-\gamma)-1}ds\right)\norm{G_1}_{L^\infty(0,T;L^2(\Omega))}.$$
It follows that
$$\begin{aligned}\norm{v_1}_{L^\infty(0,T;H^{2\gamma}(\Omega))}&\leq C(\norm{G_1}_{L^\infty(0,T;L^2( \Omega))}+\norm{G}_{L^\infty(0,T;H^2( \Omega))})\\
\ &\leq C(\norm{G}_{W^{1,\infty}(0,T;L^2(\omega\times(-\ell,\ell)))}+\norm{G}_{L^\infty(0,T;H^2(\omega\times(-\ell,\ell)))})\end{aligned}$$
and combining this with  \eqref{t1d}, we get 
\begin{equation}\label{tt}\norm{v_1}_{L^\infty(0,T;H^{2\gamma}(\omega\times(-\ell,\ell))}\leq C(\norm{\partial_{x_d}u(\cdot,\cdot,\ell)}_{L^\infty(0,T;H^{\frac{3}{2}}(\omega))}+\norm{\partial_{x_d}u(\cdot,\cdot,\ell)}_{W^{1,\infty}(0,T;H^\delta(\omega))}).\end{equation}
Moreover, the estimate
$$\norm{\partial_{x_d}v_1(\cdot,\cdot,\ell)}_{L^\infty(0,T;L^2(\omega))}\leq C\norm{v_1}_{L^\infty(0,T;H^{2\gamma}(\omega\times(-\ell,\ell))},$$
implies
 \begin{equation}\label{t1e}\norm{\partial_{x_d}v_1(\cdot,\cdot,\ell)}_{L^\infty(0,T;L^2(\omega))}\leq C(\norm{\partial_{x_d}u(\cdot,\cdot,\ell)}_{L^\infty(0,T;H^{\frac{3}{2}}(\omega))}+\norm{\partial_{x_d}u(\cdot,\cdot,\ell)}_{W^{1,\infty}(0,T;H^\delta(\omega))}).\end{equation}
On the other hand, we have 
$$v_2(t,\cdot)=\int_0^tS(t-s)[-qv(s,\cdot)+H(s,\cdot)+\partial_{x_d}\tilde{F}(s,\cdot)]ds.$$
 Thus, applying (H00) and repeating the arguments of Lemma \ref{l1}, we get
$$\norm{v_2(t,\cdot)}_{H^{2\gamma}(\omega\times(-\ell,\ell))}\leq C\int_0^t(t-s)^{\alpha(1-\gamma)-1}[\norm{f(s,\cdot)}_{L^2(\omega)}+\norm{v(s,\cdot)}_{L^2(\omega\times(-\ell,\ell))}+\norm{H(s,\cdot)}_{L^2(\omega\times(-\ell,\ell))}]ds$$
which, for all $t\in(0,T]$, implies that
$$\norm{\partial_{x_d}v_2(t,\cdot,\ell)}_{L^2(\omega)}\leq C\int_0^t(t-s)^{\alpha(1-\gamma)-1}[\norm{f(s,\cdot)}_{L^2(\omega)}+\norm{v(s,\cdot)}_{L^2(\omega\times(-\ell,\ell))}+\norm{H(s,\cdot)}_{L^2(\omega\times(-\ell,\ell))}]ds.$$
In light  of \eqref{t3a}, we get
$$\begin{aligned}H&=\partial_{x_d}a_{dd}\partial_{x_d}^2\tilde{u}+2\sum_{j=1}^{d-1}\partial_{x_d}a_{jd}\partial_{x_j}\partial_{x_d}\tilde{u}+\sum_{j=1}^{d}\partial_{x_j}\partial_{x_d}a_{jd}\partial_{x_d}\tilde{u}+\sum_{j=1}^{d-1}\partial_{x_d}^2a_{jd}\partial_{x_j}\tilde{u}\\
\ &=\partial_{x_d}a_{dd}\partial_{x_d}v+2\sum_{j=1}^{d-1}\partial_{x_d}a_{jd}\partial_{x_j}v+\left(\sum_{j=1}^{d}\partial_{x_j}\partial_{x_d}a_{jd}\right)v+\sum_{j=1}^{d-1}\partial_{x_d}^2a_{jd}\partial_{x_j}\tilde{u}\end{aligned}$$
and it follows that
$$\norm{H(t,\cdot)}_{{L^2(\Omega)}}\leq C(\norm{v(t,\cdot)}_{H^1(\Omega)}+\norm{u(t,\cdot)}_{H^1(\Omega)}),\quad t\in(0,T].$$
In view of the equation satisfied by $v$ and according to the above arguments as well as the arguments used in Proposition \ref{l1}, for all $t\in(0,T]$, we get
$$\begin{aligned}&\norm{v(t,\cdot)}_{H^1(\Omega)}+\norm{u(t,\cdot)}_{H^1(\Omega)}\\
&\leq C(\norm{f}_{L^\infty(0,t;L^2(\omega))}+\norm{\partial_{x_d}u(\cdot,\cdot,\ell)}_{L^\infty(0,T;H^{\frac{3}{2}}(\omega))}+\norm{\partial_{x_d}u(\cdot,\cdot,\ell)}_{W^{1,\infty}(0,T;H^\delta(\omega))}).\end{aligned}$$
Thus, we find 
$$\norm{H(t,\cdot)}_{L^2(\Omega)}\leq C(\norm{f}_{L^\infty(0,t;L^2(\omega))}+\norm{\partial_{x_d}u(\cdot,\cdot,\ell)}_{L^\infty(0,T;H^{\frac{3}{2}}(\omega))}+\norm{\partial_{x_d}u(\cdot,\cdot,\ell)}_{W^{1,\infty}(0,T;H^\delta(\omega))})$$
and it follows that
$$\begin{aligned}&\norm{\partial_{x_d}v_2(t,\cdot,\ell)}_{L^2(\omega)}\\
&\leq C\left(\int_0^t(t-s)^{\alpha(1-\gamma)-1}\norm{f}_{L^\infty(0,s;L^2(\omega))}ds+\norm{\partial_{x_d}u(\cdot,\cdot,\ell)}_{L^\infty(0,T;H^{\frac{3}{2}}(\omega))}+\norm{\partial_{x_d}u(\cdot,\cdot,\ell)}_{W^{1,\infty}(0,T;H^\delta(\omega))}\right)\end{aligned}$$
Combining this with \eqref{t1c} and \eqref{t1e}, we find
$$\begin{aligned}\norm{f(t,\cdot)}_{L^2(\omega)}\leq& C(\norm{\partial_{x_d}u(\cdot,\cdot,\ell)}_{L^\infty(0,T;H^{\frac{3}{2}}(\omega))}+\norm{\partial_{x_d}u(\cdot,\cdot,\ell)}_{W^{1,\infty}(0,T;H^\delta(\omega))})\\
\ &+C\int_0^t(t-s)^{\alpha(1-\gamma)-1}\norm{f}_{L^\infty(0,s;L^2(\omega))}ds,\quad\quad\quad t\in(0,T),\end{aligned}$$
which clearly implies
$$\begin{aligned}\norm{f}_{L^\infty(0,t;L^2(\omega))}\leq& C(\norm{\partial_{x_d}u(\cdot,\cdot,\ell)}_{L^\infty(0,T;H^{\frac{3}{2}}(\omega))}+\norm{\partial_{x_d}u(\cdot,\cdot,\ell)}_{W^{1,\infty}(0,T;H^\delta(\omega))})\\
\ &+C\int_0^t(t-s)^{\alpha(1-\gamma)-1}\norm{f}_{L^\infty(0,s;L^2(\omega))}ds,\quad t\in(0,T).\end{aligned}$$
Then,  Lemma \ref{l11} (see the Appendix) implies that
$$\norm{f}_{L^\infty(0,t;L^2(\omega))}\leq C(\norm{\partial_{x_d}u(\cdot,\cdot,\ell)}_{L^\infty(0,T;H^{\frac{3}{2}}(\omega))}+\norm{\partial_{x_d}u(\cdot,\cdot,\ell)}_{W^{1,\infty}(0,T;H^\delta(\omega))}),\quad t\in(0,T)$$
from which we deduce \eqref{t1b}.\qed

\subsection{Application to the recovery of coefficients}

Consider $v$ the solution of the problem

$$
\left\{\begin{aligned}\partial_t^\alpha v+\mathcal A v+q(t,x')v=0,\quad &(t,x)\in 
(0,T)\times \tilde{\Omega},\\  
v(t,x', \ell)=h_0(t,x')\quad &(t,x')\in(0,T)\times \omega,\\
v(t,x)=h_1 ,\quad &(t,x)\in(0,T)\times \partial\omega\times(-\ell,\ell),\\ 
\partial_{x_d}v(t,x', 0)=0\quad &(t,x')\in(0,T)\times \omega,\\
v(0,x)=w_0(x),\quad &x\in\tilde{\Omega},\end{aligned}\right.$$
with $w_0$, $h_k$,  $k=0,1$, given by \eqref{trace}. Then, we can write $v=H+y$ where $y$ solves
$$\left\{\begin{aligned}\partial_t^\alpha y+\mathcal A y+q(t,x')y=H_1(t,x),\quad &(t,x)\in 
(0,T)\times\tilde{\Omega},\\  
y(t,x)=0 ,\quad &(t,x)\in(0,T)\times \partial\omega\times(0,\ell),\\ 
y(t,x', \ell)=0\quad &(t,x')\in(0,T)\times \omega,\\
\partial_{x_d}y(t,x', 0)=0\quad &(t,x')\in(0,T)\times \omega,\\
y(0,x)=0,\quad &x\in\tilde{\Omega},\end{aligned}\right.$$
where $H_1=-(\partial_t^\alpha+\mathcal A +q)H\in \mathcal C([0,T];H^{2\gamma}(\tilde{\Omega}))\cap W^{{1},{p}}(0,T; L^2(\tilde{\Omega}))$. Therefore, using the fact that, thanks to \eqref{trace}, we have 
$$H_1(0,x)=0,\quad x\in\tilde{\Omega}$$
and  repeating the arguments of the preceding section, we can prove that $v\in \mathcal C([0,T];H^{2(1+\gamma)}(\tilde{\Omega}))$ and for $d\leq3$, the Sobolev embedding theorem implies that $v,\partial_{x_d}v\in\mathcal C\left([0,T]\times\overline{\tilde{\Omega}}\right)$. Applying the previous results about recovery of source terms we can complete the proof of Corollary \ref{c1}.

\textbf{Proof of Corollary \ref{c1}.}
Let $u=v_1-v_2$ and notice that $u$ solves \eqref{eq11} with $q=q_1$, $f=(q_2-q_1)$ and $R=v_2$. Then, using the fact that  $v_2,\partial_{x_d}v_2\in\mathcal C\left([0,T]\times\overline{\tilde{\Omega}}\right)$ and the fact that, thanks to \eqref{t1a}, \eqref{t5a}, we are in position to apply Theorem \ref{t1} from which we deduce \eqref{t5a}.\qed

\section*{Appendix}

In this appendix we recall several classical result about fractional diffusion equation and properties of Mittag-Leffler function.

We start by recalling a property of Mittag-Leffler function which follows from formula (1.148) of \cite[Theorem 1.6]{P}, one can check the following properties of the Mittag-Leffler function.
\begin{lem}\label{l10} Let $\lambda>0$, $\alpha\in(0,2)$ and $\beta>0$. Then, we have
$$|E_{\alpha,\beta}(-\lambda t^\alpha)|\leq \frac{C}{1+\lambda t^\alpha},\quad t>0,$$
with $C>0$ independent of $t$ and $\lambda$.
\end{lem}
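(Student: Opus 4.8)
The plan is to reduce the claim to the standard asymptotic behaviour of the Mittag-Leffler function on the negative real axis together with a simple continuity argument. Writing $s = \lambda t^\alpha \in [0,+\infty)$, it suffices to show that the function
$$g(s) := (1+s)\,|E_{\alpha,\beta}(-s)|$$
is bounded on $[0,+\infty)$, for then $|E_{\alpha,\beta}(-\lambda t^\alpha)| = g(s)/(1+s) \leq C/(1+\lambda t^\alpha)$ with $C := \sup_{s\geq 0} g(s)$, and this constant depends only on $\alpha$ and $\beta$, hence is independent of $t$ and $\lambda$ as required.

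First I would record that $E_{\alpha,\beta}$ is an entire function, since its defining power series has infinite radius of convergence; in particular $g$ is continuous on $[0,+\infty)$. The only remaining issue is therefore the behaviour of $g$ as $s\to+\infty$.

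The key step is to invoke the asymptotic expansion of \cite[Theorem 1.6, formula (1.148)]{P}. For $\alpha \in (0,2)$ one may choose a real number $\mu$ with $\frac{\pi\alpha}{2} < \mu < \min\{\pi, \pi\alpha\}$; such a $\mu$ exists precisely because $\alpha < 2$ forces $\pi\alpha/2 < \pi$, while for $\alpha \leq 1$ one also has $\pi\alpha/2 < \pi\alpha$. The argument $z=-s$ lies on the negative real axis, where $|\arg z| = \pi$ satisfies $\mu \leq |\arg z| \leq \pi$, so the expansion applies and yields
$$E_{\alpha,\beta}(-s) = \frac{s^{-1}}{\Gamma(\beta-\alpha)} + O(s^{-2}), \qquad s\to+\infty,$$
with the leading term understood to vanish when $\beta-\alpha$ is a nonpositive integer (in which case the decay is even faster). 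Consequently $g(s)=(1+s)|E_{\alpha,\beta}(-s)|$ converges to the finite limit $1/|\Gamma(\beta-\alpha)|$ as $s\to+\infty$.

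Finally, a continuous function on $[0,+\infty)$ admitting a finite limit at $+\infty$ is bounded, so $\sup_{s\geq 0} g(s) < \infty$; substituting back $s = \lambda t^\alpha$ gives the stated inequality. The only point requiring genuine care is the verification of the angular admissibility condition on $\mu$ across the full range $\alpha\in(0,2)$, which is why I isolate it above; once this is in place the estimate is immediate from \cite{P} and continuity, and no direct manipulation of the Mittag-Leffler series is needed.
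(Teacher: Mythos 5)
Your proposal is correct, and it rests on the same source the paper invokes, namely Podlubny's Theorem 1.6; the difference is in how much of that theorem you use. The paper's proof is a one-line citation: formula (1.148) of \cite{P} \emph{is} the uniform estimate $|E_{\alpha,\beta}(z)|\leq C/(1+|z|)$ on the sector $\mu\leq|\arg z|\leq\pi$, valid for all $|z|\geq 0$, so specializing to $z=-\lambda t^{\alpha}$ gives the lemma immediately with no continuity argument needed. You instead quote only the large-$|z|$ asymptotic expansion (which in \cite{P} is the content of Theorem 1.4, formula (1.143), not (1.148) --- a citation slip worth fixing) and then patch the compact part of $[0,+\infty)$ by continuity of the entire function $E_{\alpha,\beta}$. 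That route is perfectly sound: your verification of the angular condition is correct (indeed $\pi\alpha/2<\min\{\pi,\pi\alpha\}$ holds for every $\alpha\in(0,2)$, so the parenthetical about $\alpha\leq 1$ is superfluous), your handling of the degenerate case where $\beta-\alpha$ is a nonpositive integer is right since $1/\Gamma$ vanishes there, and the identification of the limit of $(1+s)|E_{\alpha,\beta}(-s)|$ makes the origin of the constant explicit, which the packaged bound hides. What you lose is brevity and uniformity in $\arg z$ (irrelevant here, since only the negative real axis is used); what you gain is a self-contained derivation that exhibits the constant. Either way the statement is established.
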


Now let us consider the following  result which can be deduced  from other  known results (see e.g \cite[Theorem 2.2]{SY} and \cite[Theorem 1.3]{LKS}) considered for $\alpha\in(0,1)$ that we extend to $\alpha\in(0,2)$.
\begin{lem}\label{l2} Let $F\in L^2(Q)$, $\alpha\in(0,2)$ and $m,n=0,1$. Then problem \eqref{eq11}-\eqref{b2} admits a unique solution $u\in L^2(0,T;H^1(\Omega))$, satisfying $\mathcal A u$, $\partial_t^\alpha u\in L^2(Q)$ and the following estimate holds true 
\begin{equation}\label{l2a}\norm{\partial_t^\alpha u}_{L^2(Q)}+\norm{\mathcal Au}_{L^2(Q)}+\norm{u}_{L^2(Q)}\leq C\norm{F}_{L^2(Q)}.\end{equation}
\end{lem}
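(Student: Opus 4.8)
The plan is to diagonalize the spatial operator and reduce \eqref{eq11}-\eqref{b2} to a countable family of scalar fractional ordinary differential equations, one per eigenmode. First I would let $A$ denote the realization of $\mathcal A$ on $L^2(\Omega)$ with domain encoding the boundary conditions \eqref{b1}-\eqref{b2} for the given pair $(m,n)$; by \eqref{aa} this operator is self-adjoint, non-negative and has compact resolvent, hence admits a non-decreasing sequence of eigenvalues $(\lambda_n)_{n\geq1}$ together with an orthonormal basis of eigenfunctions $(\phi_n)_{n\geq1}$ in $L^2(\Omega)$. Writing $F_n(t)=\langle F(t,\cdot),\phi_n\rangle_{L^2(\Omega)}$ and seeking $u(t,\cdot)=\sum_n u_n(t)\phi_n$, each coefficient must solve $\partial_t^\alpha u_n+\lambda_n u_n=F_n$ with $\partial_t^k u_n(0)=0$ for $k=0,\ldots,m_\alpha$, whose unique solution is $u_n(t)=\int_0^t(t-s)^{\alpha-1}E_{\alpha,\alpha}(-\lambda_n(t-s)^\alpha)F_n(s)\,ds$. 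This is consistent with the Laplace-transform characterization of Definition \ref{d1}, since $\widehat{u_n}(p)=\widehat{F_n}(p)/(p^\alpha+\lambda_n)$, and injectivity of the Laplace transform yields uniqueness at once.

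The heart of the matter is the scalar maximal-regularity estimate $\lambda_n\|u_n\|_{L^2(0,T)}\leq C\|F_n\|_{L^2(0,T)}$ with $C$ independent of $n$. I would obtain it from the Fourier--Laplace multiplier $m_\mu(p)=\mu/(p^\alpha+\mu)$ associated with $\mu=\lambda_n$: after extending $F_n$ by zero and passing to the boundary $p=i\xi$, one checks that $|m_\mu(i\xi)|$ and $|\xi|\,\bigl|\tfrac{d}{d\xi}m_\mu(i\xi)\bigr|$ are bounded uniformly in $\mu\geq0$ and $\xi\in\R$, so the Mikhlin--H\"ormander theorem gives an $L^2(\R)$ bound independent of $\mu$, whence the claimed estimate on $(0,T)$. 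The nontrivial point, and what I expect to be the main obstacle, is that the cited results \cite[Theorem 2.2]{SY} and \cite[Theorem 1.3]{LKS} are stated only for $\alpha\in(0,1)$, where $\mathrm{Re}\,(i\xi)^\alpha=|\xi|^\alpha\cos(\alpha\pi/2)>0$ keeps the denominator comfortably away from the origin; for $\alpha\in(1,2)$ one has $\cos(\alpha\pi/2)<0$, so I must verify directly that $(i\xi)^\alpha$, having argument $\pm\alpha\pi/2\in(-\pi,\pi)$, never meets the negative real axis and hence $p^\alpha+\mu$ stays uniformly invertible, while the second initial condition $u_n'(0)=0$ must be carried along to justify the integration by parts and the multiplier identity.

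Once the uniform modewise estimate is in hand, I would assemble the global bounds by Parseval. Summing $\lambda_n^2\|u_n\|_{L^2(0,T)}^2$ over $n$ gives $\|\mathcal A u\|_{L^2(Q)}\leq C\|F\|_{L^2(Q)}$; then $\partial_t^\alpha u=F-\mathcal A u$ yields the bound on $\|\partial_t^\alpha u\|_{L^2(Q)}$ by the triangle inequality, and $\|u\|_{L^2(Q)}\leq C\|F\|_{L^2(Q)}$ follows from $\|u_n\|_{L^2(0,T)}\leq C\lambda_n^{-1}\|F_n\|_{L^2(0,T)}$ on the modes with $\lambda_n$ bounded below, the finitely many modes with $\lambda_n=0$ (present only in the pure Neumann case $m=n=1$) being treated separately through the fractional integral $u_n=\int_0^t(t-s)^{\alpha-1}F_n(s)\,ds/\Gamma(\alpha)$. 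This establishes \eqref{l2a} together with $\mathcal A u,\partial_t^\alpha u\in L^2(Q)$. Finally, interpolating between $\mathcal A u\in L^2(Q)$ and $u\in L^2(Q)$ shows $A^{1/2}u\in L^2(Q)$, and since the form domain $D(A^{1/2})$ embeds continuously into $H^1(\Omega)$, I conclude $u\in L^2(0,T;H^1(\Omega))$, completing the proof.
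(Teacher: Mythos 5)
Your proposal is correct, and its overall architecture coincides with the paper's: realize $\mathcal A$ as a self-adjoint operator $A$ with the boundary conditions \eqref{b1}--\eqref{b2}, expand in the eigenbasis, write $u_k(t)=\int_0^t(t-s)^{\alpha-1}E_{\alpha,\alpha}(-\lambda_k(t-s)^\alpha)F_k(s)\,ds$, prove the uniform modewise bound $\lambda_k\norm{u_k}_{L^2(0,T)}\leq C\norm{F_k}_{L^2(0,T)}$, sum by Parseval to get $\norm{u}_{L^2(0,T;D(A))}\leq C\norm{F}_{L^2(Q)}$, and recover $\partial_t^\alpha u=F-\mathcal Au$. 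Where you genuinely diverge is in the proof of the scalar estimate. The paper treats $u_k=(G_k\mathds{1}_{(0,T)})*(F_k\mathds{1}_{(0,T)})$ with $G_k(t)=t^{\alpha-1}E_{\alpha,\alpha}(-\lambda_k t^\alpha)$ and applies Young's convolution inequality together with the kernel bound $\norm{G_k}_{L^1}\leq C\lambda_k^{-1}$, obtained by rescaling and the large-argument decay of $E_{\alpha,\alpha}$ (note that the simple bound $|E_{\alpha,\alpha}(-x)|\leq C(1+x)^{-1}$ of Lemma \ref{l10} is not quite enough here; one needs the quadratic decay of $E_{\alpha,\alpha}$, reflected in the paper's integrand $s^{\alpha-1}(1+s^{2\alpha})^{-1}$, to make $G_k$ integrable on $\R^+$). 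You instead pass to the Fourier--Laplace side and bound the symbol $\mu/((i\xi)^\alpha+\mu)$ uniformly in $\mu\geq0$ and $\xi$; since the target is $L^2$, Plancherel alone suffices and the Mikhlin derivative condition you verify is not actually needed. Your route buys something: it requires only the sectoriality estimate $|(i\xi)^\alpha+\mu|\geq c(\alpha)(|\xi|^\alpha+\mu)$, valid for all $\alpha\in(0,2)$ because $\arg\bigl((i\xi)^\alpha\bigr)=\pm\alpha\pi/2$ stays away from $\pi$, and it dispenses with Mittag--Leffler asymptotics entirely; this also cleanly answers your own concern about the cited results being stated only for $\alpha\in(0,1)$ (the paper handles that concern differently, by making the Young-inequality argument self-contained for all $\alpha\in(0,2)$). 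The one point to state carefully in your version is the causality of the kernel, so that the $L^2(\R)$ multiplier bound for the zero-extension of $F_k$ restricts correctly to $(0,T)$; your separate treatment of the zero modes and the $H^1$ conclusion via $D(A^{1/2})$ are both fine.
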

\begin{proof}  We prove this result  for sake of completeness.
 We fix $A$ the operator $\mathcal A$ acting on $L^2(\Omega)$ with the boundary condition \eqref{b1}-\eqref{b2}. We fix also  the non-decreasing sequence of non-negative eigenvalues $(\lambda_k)_{k\geq1}$ and associated eigenfunctions $(\phi_k)_{k\geq1}$ of $A$. Then, we consider
$$k_0:=\min\{k\in\mathbb N:\ \lambda_k>0\}.$$
Since $u$ solves \eqref{eq11}-\eqref{b2}, we have
$$u_k(t):=\left\langle u(t,\cdot),\phi_k\right\rangle_{L^2(\Omega)}=\int_0^t(t-s)^{\alpha-1}E_{\alpha,\alpha}(-\lambda_k(t-s)^\alpha)F_k(s)ds,$$
where $F_k(t):=\left\langle F(t,\cdot),\phi_k\right\rangle_{L^2(\Omega)}$.
Therefore, we have 
$$u_k(t)=(G_k\mathds{1}_{(0,T)})*(F_k\mathds{1}_{(0,T)})$$
with  $G_k(t)=t^{\alpha-1}E_{\alpha,\alpha}(-\lambda_kt^\alpha)$. Then, we find 
$$\int_0^T \sum_{k=1}^\infty \lambda_k^2|u_k(t)|^2dt=\sum_{k=1}^\infty \lambda_k^2\norm{(G_k\mathds{1}_{(0,T)})*(F_k\mathds{1}_{(0,T)})}_{L^2(0,T)}^2$$
and an application of the Young inequality yields
$$\int_0^T \sum_{k=1}^\infty\lambda_k^2|u_k(t)|^2dt=\sum_{k=1}^\infty\lambda_k^2\norm{G_k}_{L^1(0,T)}^2\norm{F_k}_{L^2(0,T)}^2.$$
On the other hand, we have 
$$\norm{G_k}_{L^1(0,T)}=\int_0^Ts^{\alpha-1}E_{\alpha,\alpha}(-\lambda_k s^{\alpha})ds=\lambda_k^{-1}\int_0^{\lambda_k^{\frac{1}{\alpha}}T}s^{\alpha-1}E_{\alpha,\alpha}(- s^{\alpha})ds,\quad k\geq k_0.$$
Combining this with Lemma \ref{l10} (see the Appendix), we get
$$\norm{G_k}_{L^1(0,T)}\leq C\lambda_k^{-1}\int_0^{+\infty}\frac{s^{\alpha-1}}{1+s^{2\alpha}}ds\leq C\lambda_k^{-1},\quad k\geq k_0+1.$$
It follows that
$$\begin{aligned}\int_0^T \sum_{k=1}^\infty (1+\lambda_k^2)|u_k(t)|^2dt&= C\int_0^T \sum_{k=1}^{k_0} |u_k(t)|^2dt+\int_0^T \sum_{k=k_0+1}^\infty (1+\lambda_k^2)|u_k(t)|^2dt\\
\ &\leq C\sum_{k=1}^\infty\norm{F_k}_{L^2(0,T)}^2\leq C\norm{F}_{L^2(Q)}^2\end{aligned}$$
and we get
$$\norm{u}_{L^2(0,T;D(A))}\leq C\norm{F}_{L^2(Q)}.$$
 In the same way, we find
$$\partial_t^\alpha u=-\mathcal A u+F\in L^2(Q)$$
which implies at the same time that $\partial_t^\alpha u\in L^2(Q)$ and \eqref{l2a}.
This proves \eqref{l2a}.\end{proof}

Let us also consider the following Gronwall type of inequality which can be find in \cite[Lemma 3]{FK} (see also \cite[Theorem 1]{YGD}).

\begin{lem}\label{l11}
  Let $C,r>0$ and $h,d\in L^1(0,T)$ be nonnegative functions satisfying
\[
	h(t)\le d(t)+C\int_0^t(t-s)^{r-1}h(s)ds,\quad t\in(0,T).
\]
  Then there exists $C_1>0$ such that 
\[
	h(t)\le d(t)+C_1\int_0^t(t-s)^{r-1}d(s)ds,\quad t\in(0,T).
\]

\end{lem}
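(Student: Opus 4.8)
The plan is to prove this fractional Gronwall inequality by iterating the hypothesis and controlling the iterated kernels by means of the Beta function. First I would introduce the positive linear operator $(Kg)(t):=C\int_0^t(t-s)^{r-1}g(s)\,ds$, so that the assumption reads $h\le d+Kh$ a.e. on $(0,T)$. Since $K$ maps nonnegative functions to nonnegative functions and is monotone, applying $K$ to this inequality and substituting repeatedly gives, by an immediate induction,
\[
h\le\sum_{k=0}^{n-1}K^kd+K^nh\quad\textrm{a.e. on }(0,T),\ n\in\mathbb N.
\]

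The key step is the explicit computation of the iterated kernel. Using the identity $\int_0^t(t-s)^{a-1}s^{b-1}\,ds=\frac{\Gamma(a)\Gamma(b)}{\Gamma(a+b)}t^{a+b-1}$, an induction on $k$ shows that the $k$-fold convolution power of $Ct^{r-1}$ equals $\frac{(C\Gamma(r))^k}{\Gamma(kr)}t^{kr-1}$, so that
\[
K^kg(t)=\frac{(C\Gamma(r))^k}{\Gamma(kr)}\int_0^t(t-s)^{kr-1}g(s)\,ds,\quad k\in\mathbb N.
\]
I would then estimate the remainder: integrating over $(0,T)$ and applying Fubini yields $\norm{K^nh}_{L^1(0,T)}\le\frac{(C\Gamma(r))^nT^{nr}}{\Gamma(nr+1)}\norm{h}_{L^1(0,T)}$, and the factorial growth of $\Gamma(nr+1)$ forces the prefactor to tend to $0$ as $n\to\infty$. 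Hence $K^nh\to0$ in $L^1(0,T)$; since every term $K^kd$ is nonnegative, letting $n\to\infty$ in the displayed inequality gives $h(t)\le\sum_{k=0}^\infty K^kd(t)$ for a.e. $t\in(0,T)$.

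It then remains to sum the kernels. Factoring $(t-s)^{r-1}$ out of the series $\sum_{k\geq1}\frac{(C\Gamma(r))^k}{\Gamma(kr)}(t-s)^{kr-1}$, the remaining factor $\sum_{k\geq1}\frac{(C\Gamma(r))^k}{\Gamma(kr)}(t-s)^{(k-1)r}$ is bounded on $(0,T)$ by the convergent constant $C_1:=\sum_{k\geq1}\frac{(C\Gamma(r))^kT^{(k-1)r}}{\Gamma(kr)}$. Substituting into $h\le\sum_{k\geq0}K^kd=d+\sum_{k\geq1}K^kd$ and invoking the monotone convergence theorem to exchange summation and integration yields exactly $h(t)\le d(t)+C_1\int_0^t(t-s)^{r-1}d(s)\,ds$, as required.

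The main obstacle is the rigorous bookkeeping around the infinite iteration: establishing the closed form of the iterated kernel via the Beta-function induction and, above all, verifying that the remainder $K^nh$ vanishes, which is what makes the series representation legitimate. Once the factorial growth of $\Gamma(nr)$ is in hand, the convergence of both the remainder and the summed kernel is routine, and the nonnegativity of all the functions involved lets every limit passage go through by monotone convergence.
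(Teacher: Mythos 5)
Your argument is correct. Note that the paper does not actually prove Lemma \ref{l11}: it is quoted as a known result, borrowed from \cite[Lemma 3]{FK} and \cite[Theorem 1]{YGD}, and your proof is precisely the standard one given in those references (iterate the Volterra operator $K$, compute the $k$-fold kernel $\frac{(C\Gamma(r))^k}{\Gamma(kr)}t^{kr-1}$ via the Beta integral, kill the remainder $K^nh$ using the superexponential growth of $\Gamma(nr+1)$, and dominate the summed tail kernel by $C_1(t-s)^{r-1}$). The only point worth making explicit is the passage from $K^nh\to0$ in $L^1(0,T)$ to the a.e.\ inequality $h\le\sum_{k\ge0}K^kd$, which requires extracting an a.e.\ convergent subsequence (or noting that the partial sums $\sum_{k=0}^{n-1}K^kd$ increase monotonically, so the limit inequality holds off a countable union of null sets); this is routine and does not affect the validity of the proof.
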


Finally let us recall, a result borrowed from \cite[Lemma 3.2]{SY}.

\begin{lem}\label{l12}
Let $\lambda>0$, $\alpha>0$ and  $m\geq1 $ be a positive integer. Then we have
$$
\frac{d^m}{dt^m}E_{\alpha,1}(-\lambda t^{\alpha})
= -\lambda t^{\alpha-m}E_{\alpha,\alpha-m+1}(-\lambda t^{\alpha}), \quad
t > 0
$$
and
$$
\frac{d}{dt}(tE_{\alpha,2}(-\lambda t^{\alpha}))
= E_{\alpha,1}(-\lambda t^{\alpha}), \quad t > 0.
$$
\end{lem}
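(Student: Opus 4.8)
The plan is to prove both identities by differentiating the defining power series of the Mittag-Leffler function term by term. Recall that $E_{\alpha,\beta}(z)=\sum_{n=0}^\infty z^n/\Gamma(\alpha n+\beta)$ is an entire function of $z$; since $\Gamma(\alpha n+\beta)$ grows faster than any geometric factor, this series converges locally uniformly, and so does each series obtained from it by formally differentiating in $t$ after the substitution $z=-\lambda t^\alpha$. This locally uniform convergence on compact subsets of $(0,+\infty)$ is what legitimizes exchanging $d/dt$ with the summation, which is the only genuinely analytic point in the argument.

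For the first identity I would write $E_{\alpha,1}(-\lambda t^\alpha)=\sum_{n=0}^\infty (-\lambda)^n t^{\alpha n}/\Gamma(\alpha n+1)$ and apply the elementary formula $\frac{d^m}{dt^m}t^{\alpha n}=\frac{\Gamma(\alpha n+1)}{\Gamma(\alpha n+1-m)}t^{\alpha n-m}$ to each term (valid for $n\geq 1$, with the convention that a pole of the denominator $\Gamma$ corresponds to a vanishing term, matching a zero of the falling product). The $n=0$ term is constant and drops out, while the factor $\Gamma(\alpha n+1)$ cancels the original denominator, leaving $\sum_{n=1}^\infty (-\lambda)^n t^{\alpha n-m}/\Gamma(\alpha n+1-m)$. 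Re-indexing by $k=n-1$ and factoring out $-\lambda t^{\alpha-m}$ transforms this into $-\lambda t^{\alpha-m}\sum_{k=0}^\infty (-\lambda t^\alpha)^k/\Gamma(\alpha k+\alpha+1-m)$, which is precisely $-\lambda t^{\alpha-m}E_{\alpha,\alpha-m+1}(-\lambda t^\alpha)$.

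For the second identity I would expand $tE_{\alpha,2}(-\lambda t^\alpha)=\sum_{n=0}^\infty (-\lambda)^n t^{\alpha n+1}/\Gamma(\alpha n+2)$ and differentiate once, obtaining $\sum_{n=0}^\infty (-\lambda)^n(\alpha n+1)t^{\alpha n}/\Gamma(\alpha n+2)$. The functional equation $\Gamma(\alpha n+2)=(\alpha n+1)\Gamma(\alpha n+1)$ simplifies each coefficient to $1/\Gamma(\alpha n+1)$, so the sum collapses to $\sum_{n=0}^\infty (-\lambda)^n t^{\alpha n}/\Gamma(\alpha n+1)=E_{\alpha,1}(-\lambda t^\alpha)$, as claimed.

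The main obstacle is not the algebra, which is routine once the Gamma functional equation is invoked, but the rigorous justification of the term-by-term differentiation. After differentiating, the surviving terms carry the powers $t^{\alpha n-m}$, which may be negative for small $n$ and hence blow up as $t\to 0^+$; this is exactly why the statement is restricted to $t>0$. I would therefore carry out the interchange on an arbitrary compact subinterval $[t_0,t_1]\subset(0,+\infty)$, where these powers stay bounded and the super-exponential growth of $\Gamma(\alpha n+\beta)$ furnishes a summable majorant independent of $t$. Granting this uniform bound, the interchange of summation and differentiation is legitimate and both formulas follow.
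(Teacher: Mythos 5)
Your proof is correct. The paper does not prove this lemma itself --- it is recalled with the citation \cite[Lemma 3.2]{SY} --- and your term-by-term differentiation of the defining power series, using the functional equation of $\Gamma$ and the convention that $1/\Gamma$ vanishes at the poles of $\Gamma$, is exactly the standard argument behind that reference; the justification of the interchange via locally uniform convergence on compact subintervals of $(0,+\infty)$ is also handled properly.
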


\section*{ Acknowledgments}
The work of the first author is partially supported by  the French National
Research Agency ANR (project MultiOnde) grant ANR-17-CE40-0029.
The second author is supported by Grant-in-Aid for Scientific Research (S) 
15H05740 of Japan Society for the Promotion of Science and
by The National Natural Science Foundation of China 
(no. 11771270, 91730303), and the "RUDN University Program 5-100".


\begin{thebibliography}{99}
%

\bibitem{BHKY}{\sc G. Bao, G. Hu, Y. Kian, T. Yin},	
{\em Inverse source problems in elastodynamics},  Inverse Problems, 
\textbf{34} (4) (2018),	045009.

\bibitem{Bez} {\sc N. Ya. Beznoshchenko}, 
{\em Determination of coefficients of higher terms in a
parabolic equation}, Siberian Math. J., \textbf{16} (1975), 360-367.

\bibitem{BK}{\sc  A. Bukhgeim and M. Klibanov}, {\em Global uniqueness of a class of multidimensional inverse problem},
Sov. Math.-Dokl., \textbf{24} (1981), 244--247.
\bibitem{CNYY}{\sc M. Cheng, J. Nakagawa, M. Yamamoto, T. Yamazaki},
{\em  Uniqueness in an inverse problem for a one
dimensional fractional diffusion equation},
Inverse Problems,
\textbf{25}
(2009), 115002.
\bibitem{choY} {\sc M. Choulli and M. Yamamoto},  {\em Some stability estimates in determining sources and coefficients}, J. Inverse Ill-Posed Probl., \textbf{12} (3) (2004), 233--243.

\bibitem{cho} {\sc M. Choulli and M. Yamamoto}, {\em Some stability estimates in determining sources and coefficients}, 
    J. Inverse Ill-Posed Probl., \textbf{14} (4) (2006), 355-373.
\bibitem{FK}{\sc K. Fujishiro and Y. Kian}, {\em Determination of time dependent factors of coefficients in fractional diffusion equations},  Math. Control Related Fields, \textbf{6} (2016), 251-269.
\bibitem{GK}{\sc P. Gaitan and Y. Kian}, {\em A stability result for a time-dependent potential in a cylindrical domain}, Inverse Problems \textbf{29} (6) (2013), 065006.

\bibitem{Gr}{\sc P. Grisvard}, {\em Elliptic problems in nonsmooth domains}, Pitman, London, 1985.
\bibitem{HK}{\sc G. Hu, Y. Kian}, {\em Uniqueness and stability for the recovery of a time-dependent source and initial conditions in elastodynamics}, preprint, 	arXiv:1810.09662.
\bibitem{HKLZ}{\sc G. Hu, Y. Kian, P. Li, Y. Zhao}, {\em Inverse moving source problems in electrodynamics}, to appear in Inverse Problems, https://doi.org/10.1088/1361-6420/ab1496.
\bibitem{IY}{\sc O. Yu. Imanuvilov and  M. Yamamoto},  {\em Lipschitz stability in inverse parabolic problems by the Carleman estimate}, Inverse Problems, \textbf{14} (1998),1229-1245.
\bibitem{I} V. Isakov,  \emph{Inverse source problems}, Mathematical surveys and monographs, AMS, 1990.

\bibitem{JLLY}{\sc D. Jiang, Z. Li, Y. Liu,  M. Yamamoto}, {\em Weak unique continuation property and a related inverse source problem for time-fractional diffusion-advection equations}, Inverse Problems, \textbf{33} (2017), 055013.

\bibitem{JR0}
{\sc B. Jin and W. Rundell}, 
{\em An inverse problem for a one-dimensional
time-fractional diffusion problem}, Inverse Problems, \textbf{28} (2012) 
75010-75028 (19pp).

\bibitem{JR} {\sc B. Jin and W. Rundell}, {\em A tutorial on inverse problems for anomalous diffusion processes}, Inverse problems, \textbf{31} (2015), 035003.

\bibitem{KOSY}{\sc Y. Kian, L. Oksanen, E. Soccorsi, and M. Yamamoto}, {\em Global uniqueness in an inverse problem for time-fractional diffusion equations}, J. Diff. Equat., \textbf{264} (2018), 1146-1170.
\bibitem{KSS} Y. Kian, D. Sambou and E. Soccorsi,
Logarithmic stability inequality in an inverse source problem for the heat equation on a waveguide,  to appear in Applicable Analysis.
\bibitem{KSY}{\sc Y. Kian,  E. Soccorsi, M. Yamamoto},  {\em On time-fractional diffusion equations with space-dependent variable order},  Annales Henri Poincar\'e, \textbf{19} (2018), 3855-3881.
\bibitem{KY}{\sc Y. Kian, M. Yamamoto}, {\em On existence and uniqueness of 
solutions for semilinear fractional wave equations},  Fract. Calc. Appl. Anal., {\bf 20} (2017), 117-138.

\bibitem{KST}{\sc A.A. Kilbas, H.M. Srivastava and J.J. Trujillo},
{\em Theory and applications of fractional differential equations},
Elsevier, Amsterdam, 2006.

\bibitem{Ki}{\sc C-K. Kim}, {\em An analytical solution to heat conduction with a moving heat source}, Journal of Mechanical Science and Technology, \textbf{25} (4) (2011), 895-899.
\bibitem{LKS}{\sc Z.Li, Y. Kian, E. Soccorsi},	{\em Initial-boundary value problem for distributed order time-fractional diffusion equations}, 	preprint, arXiv:1709.06823. 

\bibitem{LM1}{\sc J.-L. Lions and E. Magenes}, 
{\em Non-homogeneous Boundary Value Problems and Applications}, Vol. I, 
Spring
er-Verlag, Berlin, 1972.

\bibitem{LRY}{\sc Y. Liu, W. Rundell and M. Yamamoto}, {\em Strong maximum principle for fractional diffusion equations and an application to an inverse source problem}, Fract. Calc. Appl. Anal., \textbf{19} (2016), 888-906.
\bibitem{Ma} {\sc F. Mainardi}, 
{\em On the initial value problem for the fractional diffusion-wave equation}, 
in: S. Rionero, T. Ruggeri (Eds.), Waves and Stability in Continuous
Media, World Scientific, Singapore, 1994, pp. 246-251.


\bibitem{MK}{\sc R. Metzler and J. Klafter}, {\em The random walk's guide to
anomalous diffusion:a fractional dynamics approach},
Phyics Reports, \textbf{339} (2000) 1-77.

\bibitem{NSY}{\sc J. Nakagawa, K. Sakamoto, M. Yamamoto}, {\em Overview to mathematical analysis for fractional diffusion equations-
new mathematical aspects motivated by industrial collaboration}, Journal of Math-for-Industry, Vol.2 (2010A-10), 99-108.


\bibitem{P}{\sc I. Podlubny}, {\em Fractional differential equations},  Academic Press, San Diego, 1999.
  \bibitem{SY}{\sc K. Sakamoto and M. Yamamoto}, {\em Initial value/boundary value problems for fractional diffusion-wave equations and applications to some inverse problems}, J. Math. Anal. Appl.,  \textbf{382} (2011),  426-447.

\bibitem{RZ}{\sc W. Rundell, Z. Zhang}, {\em Recovering an unknown source in a fractional diffusion problem}, Journal of Computational Physics
Volume, \textbf{368} (2018), 299-314.


\bibitem{SKB} {\sc I.M. Sokolov, J. Klafter, A. Blumen},
{\em Fractional kinetics}, Physics Today, \textbf{55} (2002), 48-54.


\bibitem{TU1} {\sc  S. Tatar, S. Ulusoy}, {\em An inverse source 
problem for a one-dimensional space-time fractional diffusion equation}, Applicable Analysis, \textbf{94} (11) (2015), 2233-2244
.
\bibitem{TU2} {\sc  S. Tatar, S. Ulusoy}, {\em Determination of an unknown source term in a space-time fractional diffusion equation}, Journal of Fractional Calculus and Applications, \textbf{6} (2015), 94-101.
\bibitem{YGD} {\sc H. Ye , J. Gao, Y. Ding}, {\em A generalized Gronwall inequality and its application to a fractional differential equation}, J. Math. Anal. Appl., \textbf{328} (2007), 1075-1081.
\bibitem{Ya95} M. Yamamoto,  Stability, reconstruction formula and regularization for an inverse source hyperbolic problem by
control method, Inverse Problems, \textbf{11} (1995), 481-496.

\bibitem{yam}{\sc M. Yamamoto}, {\em Conditional stability in the determination of force terms of heat equations in a rectangle},  Mathematical and Computer Modelling \textbf{18} (1993), 79-88.

\bibitem{yam1}  {\sc M. Yamamoto},  {\em Conditional stability in the determination of densities of heat sources in a bounded domain}, Estimation and Control of Distributed Parameter Systems (W. Desch, F. Kappel and K. Kunisch), 
Birkh{a}user Verlag, Basel, 1994, 359-370.       
   
\end{thebibliography}
\end{document}